\title[Game Tree Approach]{\protect{A Game-Tree approach to discrete infinity Laplacian with Running Costs}}
\author{Qing Liu}\author{Armin Schikorra}
\address{Qing Liu, Department of Mathematics, University of Pittsburgh, Pittsburgh, PA 15260, USA, {\tt qingliu@pitt.edu}}
\address{Armin Schikorra, Max-Planck Institut MiS Leipzig, Inselstr. 22, 04103 Leipzig, Germany, {\tt armin.schikorra@mis.mpg.de}}
\thanks{A.S. is supported by DAAD fellowship D/12/40670.}
\def\ilap{\Delta_\infty}
\def\eps{\varepsilon}
\def\R{{\mathbb R}}
\def\N{{\mathbb N}}
\def\T{{\mathbb T}}
\newtheorem{theorem}{Theorem}
\newtheorem{mythm}{Theorem}
\newtheorem{lemma}[theorem]{Lemma}
\newtheorem{corollary}[theorem]{Corollary}
\newtheorem{proposition}[theorem]{Proposition}
  \theoremstyle{definition}
\newtheorem{remark}[theorem]{Remark}
\newtheorem{definition}[theorem]{Definition}
\newtheorem{example}[theorem]{Example}
\def\diam{{\rm diam\,}}
\newcommand{\brac}[1]{\left (#1 \right )}
\newcommand{\barint}{
\rule[.036in]{.12in}{.009in}\kern-.16in \displaystyle\int }
\newcommand{\barcal}{\mbox{$ \rule[.036in]{.11in}{.007in}\kern-.128in\int $}}
\def\mvint_#1{\mathchoice
          {\mathop{\vrule width 6pt height 3 pt depth -2.5pt
                  \kern -8pt \intop}\nolimits_{\kern -3pt #1}}%
          {\mathop{\vrule width 5pt height 3 pt depth -2.6pt
                  \kern -6pt \intop}\nolimits_{#1}}%
          {\mathop{\vrule width 5pt height 3 pt depth -2.6pt
                  \kern -6pt \intop}\nolimits_{#1}}%
          {\mathop{\vrule width 5pt height 3 pt depth -2.6pt
                  \kern -6pt \intop}\nolimits_{#1}}}
\numberwithin{theorem}{section} \numberwithin{equation}{section}
\DeclareMathOperator*{\stinfsup}{\operatorname{EXT}}
\DeclareMathOperator*{\infsup}{infsup}
\begin{document}

\sloppy

\subjclass[2010]{35A35, 49C20, 91A05, 91A15}
\keywords{Dynamic Programming principle, infinity Laplace, tug-of-war with running cost}
\sloppy


\begin{abstract}
We give a self-contained and elementary proof for boundedness, existence, and uniqueness of solutions to dynamic programming principles (DPP) for biased tug-of-war games with running costs. The domain we work in is very general, and as a special case contains metric spaces. Technically, we introduce game-trees and show that a discretized flow converges uniformly, from which we obtain not only the existence, but also the uniqueness. Our arguments are entirely deterministic, and also do not rely on (semi-)continuity in any way; in particular, we do not need to mollify the DPP at the boundary for well-posedness.
\end{abstract}
\maketitle

\section{Introduction}
Let $(X,d)$ be a metric space of finite diameter, and let $Y \subsetneq X$ be any nonempty, proper subset. With $B_\eps(x)$ we denote the balls centered at $x$ with $d$-radius $\eps$. For simplicity, let us assume for the introduction that these are the open balls; Later we see that all the results presented here also hold for closed balls, and we even can treat much more general sets $B(x)$, cf. Definition~\ref{def:admissible}.

Given running costs $f = \frac{1}{2}\eps^2 \tilde{f}: Y \to \R$ and boundary values $F: X \backslash Y \to \R$, for $\mu \in (0,1)$, and $\eps > 0$, we are interested in the analysis of solutions $u: X \to \R$ to the following Dynamic Programming Principle (DPP)
\begin{equation}\label{eq:DPP}
 \begin{cases}
  u(x) = \mu \sup\limits_{B_\eps(x)} u + (1-\mu) \inf\limits_{B_\eps(x)} u + \frac{1}{2}\eps^2 \tilde{f}(x) \quad &\mbox{if $x \in Y$},\\
 u(x) = F(x) \quad &\mbox{if $x \in X \backslash Y$}.
  \end{cases}
\end{equation}
In PDE-terms, the set $Y$ plays the role of a domain, and $X \backslash Y$ plays the role of the boundary of $Y$.

If one, e.g., thinks of $Y$ as a domain in some euclidean space $X = \R^n$, then as shown for $f \equiv 0$ in \cite{PPS10} with $\mu = 1/2-\beta\eps/4$, this can be seen as a discretization of the PDE
\begin{equation}\label{eq:ilap}
 \ilap u + \beta |\nabla u| = \tilde{f}(x),
\end{equation}
which was our main motivation for considering this particular DPP, see also \cite{QAPerron}.

We show that if $\inf_Y f > 0$, $\sup_X |F| + \sup_X|f| < \infty$, then there exists a unique solution $u : X \to \R$ to \eqref{eq:DPP}. 

In fact, we prove that the solution $u$ to \eqref{eq:DPP} is the uniform limit of the sequence $u_k: X \to \R$, which is obtained by the following iteration starting from an arbitrary $u_0 : X \to \R$ with $\sup_X |u_0| < \infty$:
\begin{equation}\label{eq:iteration}
 \begin{cases}
  u_{k+1}(x) = \mu \sup\limits_{B_\eps(x)} u_k + (1-\mu) \inf\limits_{B_\eps(x)} u_k + f(x) \quad &\mbox{if $x \in Y$},\\
 u_{k+1}(x) = F(x) \quad &\mbox{if $x \in X \backslash Y$}.
  \end{cases}
\end{equation}
In some sense, \eqref{eq:iteration} can be interpreted as a discrete version of the following flow for $u: [0,\infty) \times X \to \R$
\[
 \begin{cases}
  u_t = \ilap u  + \beta |\nabla u| - \tilde{f}(x) \quad &\mbox{in $Y \times (0,\infty)$}\\
  u = F \quad &\mbox{in $X \backslash Y \times (0,\infty)$}\\
  u(0,\cdot) = u_0(\cdot) \quad &\mbox{in $Y$}.
 \end{cases}
\]
Our results therefore imply that the discretized flow starting from any $u_0: X \to \R$ converges to a solution of the discrete version of \eqref{eq:ilap}.

A flow-approach was also applied to a stationary Neumann boundary problem in \cite{APSS}. The authors considered a long-time limit of the value function associated with a time-dependent tug-of-war game on graphs and smooth domains. We however treat a distinct problem with the iteration method, very different from their probability approach.

The iteration \eqref{eq:iteration} is inspired by the recent article \cite{LuiroParviainenSaksman}, where the authors considered the following DPP for $\alpha \in (0,1]$
\[
  u(x) = (1-\alpha) \brac{\frac{1}{2} \sup\limits_{B_\eps(x)} u + \frac{1}{2} \inf\limits_{B_\eps(x)} u} + \alpha \mvint_{B_\eps(x)} u.
\]
They showed uniform convergence for the iteration starting from Borel measurable functions $u_0$. 
Nevertheless, their arguments rely crucially on the assumption $\alpha > 0$. Since we deal with the case of $\alpha = 0$ and positive running costs $f$, our techniques are different.


We also obtain results for the DPP-version of super- and subsolutions,
\begin{definition}[Super and Sub-Solutions to \eqref{eq:DPP}]\label{def:subsolution}
We say that $u : X \to \R$ is a supersolution if
\[ 
\begin{cases}
  u(x) \geq \mu \sup\limits_{B_\eps(x)} u + (1-\mu) \inf\limits_{B_\eps(x)} u + f(x) \quad &\mbox{if $x \in Y$},\\
 u(x) = F(x) \quad &\mbox{if $x \in X \backslash Y$},
\end{cases}
\]
and a subsolution if
\[ 
\begin{cases}
  u(x) \leq \mu \sup\limits_{B_\eps(x)} u + (1-\mu) \inf\limits_{B_\eps(x)} u + f(x) \quad &\mbox{if $x \in Y$},\\
 u(x) = F(x) \quad &\mbox{if $x \in X \backslash Y$}.
\end{cases}
\]
As usual, a function $u$ is a solution if and only if it is both, a subsolution and a supersolution.
\end{definition}
Note that $u \equiv \pm \infty$ in $Y$ is a subsolution and supersolution. All our Theorems will exclude this case.

Also, one observes that if $u_0$ in \eqref{eq:iteration} is a subsolution, then pointwise $u_{k+1} \geq u_k$ for all $k \in \N_0$, and if $u_0$ is a supersolution, then $u_{k+1} \leq u_k$ for all $k \in \N_0$.

Our first result is the uniform boundedness of solutions to \eqref{eq:iteration}, as well for subsolutions as also for supersolutions:
\begin{mythm}[Boundedness]\label{th:boundedness}
For any $\Lambda > 0$, $\mu \in (0,1)$, there exists $C = C(\mu,\Lambda) > 0$ such that the following holds: for any
$u_k : X \to \R$, $k \in \N_0$, such that
\begin{equation}\label{eq:supXu0linfty}
 \sup_X u_0 < \infty,
\end{equation}
and
\[
 \begin{cases}
  u_{k+1}(x) \leq \mu \sup\limits_{B_\eps(x)} u_k + (1-\mu) \inf\limits_{B_\eps(x)} u_k + \Lambda \quad &\mbox{if $x \in Y$},\\
 u_{k+1}(x) \leq \Lambda \quad &\mbox{if $x \in X \backslash Y$},
  \end{cases}
\]
we have
\[
 \limsup_{k \to \infty} \sup_X u_k \leq C.
\]
In particular, any subsolution $\underline{u}: X \to \R$ with $\sup_X \underline{u} < \infty$ satisfies
\[
 \sup_X \underline{u} \leq C,
\]
and any supersolution $\bar{u}: X \to \R$ with $\inf_X \bar{u} > -\infty$ satisfies
\[
 \inf_X \bar{u} \geq -C.
\]
\end{mythm}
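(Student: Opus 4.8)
The plan is to exploit the fact that the "tug-of-war" operator $v \mapsto \mu \sup_{B_\eps} v + (1-\mu)\inf_{B_\eps} v$ is monotone and contracts oscillations, so that adding a constant $\Lambda$ at each step can only push the supremum up by a controlled amount before it stabilizes. Concretely, I would look for a supersolution of the form of a constant $C$ (in $Y$) that dominates the iteration; since $F$ and $f$ are bounded by $\Lambda$, the natural candidate is a large constant, but a pure constant gains nothing from the $\mu/(1-\mu)$ split, so one needs to be cleverer.

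\textbf{Step 1 (Comparison with a scalar ODE/recursion).} Set $M_k := \sup_X u_k$, which is finite for $k=0$ by \eqref{eq:supXu0linfty} and then finite for all $k$ by the inductive bound. From the hypothesis, for $x \in Y$ we have $u_{k+1}(x) \le \mu M_k + (1-\mu)\inf_{B_\eps(x)} u_k + \Lambda$. The key is that the term $(1-\mu)\inf_{B_\eps(x)} u_k$ is strictly smaller than $(1-\mu)M_k$ \emph{unless $u_k$ is essentially constant near the point where the sup is attained} — so a naive estimate $M_{k+1} \le M_k + \Lambda$ only gives linear growth and is not enough. The fix: one must use that to have $u_{k+1}$ close to $M_k + \Lambda$ at some point $x$, one needs $u_k$ to be close to $M_k$ on a whole ball $B_\eps(x)$, i.e.\ the near-maximum set of $u_k$ must be "thick". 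Iterating this, near-maximality propagates outward through chains of overlapping balls until it reaches $X \backslash Y$, where the value is pinned at $\le \Lambda$, producing the desired a priori cap.

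\textbf{Step 2 (Geometric decay via chaining to the boundary).} Quantitatively: fix a small $\delta>0$ and suppose $\sup_X u_{k+1} \ge C$ for some large $C$. Pick $x_0 \in Y$ (it must lie in $Y$ if $C > \Lambda$) with $u_{k+1}(x_0) \ge C - \delta$. Then $\mu \sup_{B_\eps(x_0)} u_k + (1-\mu)\inf_{B_\eps(x_0)} u_k \ge C - \delta - \Lambda$, and since $\sup_{B_\eps(x_0)} u_k \le M_k$ one gets $\inf_{B_\eps(x_0)} u_k \ge \frac{1}{1-\mu}\big(C - \delta - \Lambda - \mu M_k\big)$. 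Combined with $\inf_{B_\eps(x_0)} u_k \le M_k$ this forces $M_k \ge C - \delta - \Lambda$ as well, AND it says every point of $B_\eps(x_0)$ has $u_k$-value at least $\frac{1}{1-\mu}(C-\delta-\Lambda-\mu M_k)$; feeding $M_k \le M_{k}$ back, one shows that on $B_\eps(x_0)$ the function $u_k$ is within $O(\mu/(1-\mu))\cdot(M_k - C) + O(\delta+\Lambda)$ of its own max. Repeating the argument at any $x_1 \in B_\eps(x_0) \cap Y$ shows $u_{k-1}$ is near-maximal on $B_\eps(x_1)$, and so on; after $j$ steps we are looking at $u_{k-j}$ near-maximal on a union of $j$-fold iterated $\eps$-balls around $x_0$. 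The only way this chain can terminate without hitting the boundary is if it runs out of steps ($j = k$), but by taking $k \to \infty$ in $\limsup_k \sup_X u_k$ we may assume $k$ is as large as we like; alternatively the iterated balls eventually meet $X \backslash Y$ because $X$ has finite diameter and $Y \ne X$ — here is where $\diam X < \infty$ and $Y \subsetneq X$ enter. At the boundary the value is $\le \Lambda$, which contradicts near-maximality at level $C$ once $C$ is chosen larger than $\Lambda$ plus the accumulated error, which is a convergent geometric series in the ratio $\mu/(1-\mu)$ when $\mu < 1/2$; for general $\mu \in (0,1)$ one symmetrizes / runs the argument with the roles adapted, or absorbs using that the error at each chaining step is multiplied by a factor $<1$ after one full sup-inf pass.

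\textbf{Step 3 (Conclusion and the sub/supersolution corollary).} This yields $\limsup_{k\to\infty} \sup_X u_k \le C(\mu,\Lambda)$. For the final two assertions: a subsolution $\underline u$ with $\sup_X \underline u<\infty$ satisfies the hypotheses with $u_k \equiv \underline u$ for all $k$ and $\Lambda = \sup_X|f| + \sup_X|F|$ (or the given $\Lambda$), so $\sup_X \underline u = \limsup_k \sup_X u_k \le C$; the supersolution statement follows by applying the subsolution case to $-\bar u$, using that $-\bar u$ is a subsolution of the DPP with running cost $-f$ and boundary data $-F$ and that $\sup$ and $\inf$ swap under negation while $\mu \leftrightarrow 1-\mu$ — both in $(0,1)$, and the constant $C$ may be taken symmetric in that swap.

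\textbf{Main obstacle.} The delicate point is making the chaining argument in Step 2 rigorous \emph{uniformly in $k$} and with an error that genuinely sums to something finite: one must track how "near-maximality of $u_{k+1}$ at $x$ at level $C$" degrades to "near-maximality of $u_k$ on $B_\eps(x)$ at level $C'$" with $C' $ only slightly smaller, show the cumulative loss over arbitrarily many chaining steps stays bounded (this is where the contraction factor tied to $\mu\in(0,1)$ is essential — a purely additive loss would be fatal), and handle the geometry of how the iterated $\eps$-neighborhoods of a point must eventually exhaust $X$ and hence meet $X\backslash Y$. Getting the constant $C$ to depend only on $\mu$ and $\Lambda$ — and not on $\eps$, on the geometry of $X$, or on $u_0$ — is the crux, and is presumably where the "admissible $B(x)$" hypotheses from Definition~\ref{def:admissible} are invoked to guarantee the needed chaining/covering property.
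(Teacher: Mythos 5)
Your core intuition is sound and matches the paper's: the inf-term lets one chain toward the boundary $X\setminus Y$, where values are capped at $\Lambda$, and the finite diameter guarantees this chain is short. But your quantitative implementation is not right, and the issues you flag as ``delicate'' in your last paragraph are in fact genuine gaps rather than details to be smoothed over.

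The specific problem is the claimed contraction factor. When you pass from near-maximality of $u_{k+1}$ at $x$ to near-maximality of $u_k$ on $B_\eps(x)$, the inequality
\[
\inf_{B_\eps(x)} u_k \;\ge\; \frac{u_{k+1}(x) - \Lambda - \mu\,\sup_{B_\eps(x)} u_k}{1-\mu}
\]
shows that a deficit $\eta$ at step $k+1$ becomes a deficit of order $\frac{1}{1-\mu}\,\eta + \frac{\Lambda}{1-\mu} + \cdots$ at step $k$. The factor $\frac{1}{1-\mu}$ is strictly greater than $1$, not $\mu/(1-\mu)$, and it is never $<1$ for any $\mu \in (0,1)$; so the error \emph{expands} along the chain, and the ``convergent geometric series when $\mu<1/2$'' and the ``symmetrize for general $\mu$'' steps do not work as stated. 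What saves the argument is only that the chain has length at most $\diam X$, so the accumulated error is a finite (though exponential in $\diam X$) quantity. Your proposal also doesn't resolve the circularity you point out yourself: the near-max level at step $k$ is measured against $M_k = \sup_X u_k$, which is exactly the unbounded quantity under discussion, and controlling a $\limsup$ of a non-monotone sequence by a pointwise chain of this type requires a genuine mechanism that is missing here. Finally, $C$ \emph{does} depend on the geometry via $\diam X$ (the paper's general Theorem~\ref{th:boundednessGen} makes this explicit), so the hope to have $C$ independent of the geometry is misplaced.

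The paper handles both issues at once by slicing $X$ into layers $X_0\subset X_1\subset\cdots\subset X_d$ according to the number of $B$-steps needed to reach the boundary, setting $a^k_\alpha := \sup_{X_\alpha} u_k$, and reducing the PDE-type inequality to the finite system $a^{k+1}_\alpha \le \mu\,a^k_d + (1-\mu)\,a^k_{\alpha-1} + \Lambda$, $a^{k+1}_0 \le \Lambda$. This is your chaining idea, but formulated once and for all for suprema over layers rather than pointwise, which eliminates the dependence on $M_k$. Boundedness then follows from Proposition~\ref{pr:akalphaest}, a Lyapunov-type contraction estimate for a carefully chosen weighted sum $b^k = \sum_\alpha \lambda_\alpha a^k_\alpha$ with $\lambda_\alpha = (\tau(1-\mu))^{d-\alpha}$. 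The weights are exactly what replaces your (incorrect) naive contraction factor: $b^k$ contracts even though each individual $a^k_\alpha$ need not. If you want to salvage the pointwise-chaining route, you would need to recast it in essentially this form; as written, Step 2 does not close.
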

Theorem~\ref{th:boundedness} is a special case of Theorem~\ref{th:boundednessGen} in Section~\ref{s:bounded}.

In \cite{QAPerron} we show a similar boundedness result with different methods for more general DPP's, but only for sub- and supersolutions.

\begin{mythm}[Uniform Convergence]\label{th:uniformconvergence}
Fix $\mu \in (0,1)$, $f,F: X \to \R$, such that
\[
 \sup_X |F| + \sup_X |f| < \infty,
\]
and
\[
 \inf_Y f > 0.
\]
Then there exists $u: X \to \R$, such that $u_k$ converges uniformly to $u$, for any sequence $u_k: X \to \R$ as in \eqref{eq:iteration}
with
\[
 \sup_X |u_0| < \infty.
\]
\end{mythm}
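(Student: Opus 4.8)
The plan is to combine the boundedness result (Theorem~\ref{th:boundedness}) with a contraction/comparison estimate that is made effective precisely because $\inf_Y f > 0$. First I would reduce to a canonical situation: since the iteration \eqref{eq:iteration} is order-preserving (if $v_0 \le w_0$ pointwise then $v_k \le w_k$ for all $k$), and since adding a constant to $u_0$ shifts the whole orbit by the same constant, it suffices to prove that two orbits $(u_k)$, $(v_k)$ arising from two bounded initial data converge to a common limit. I would start the analysis from two distinguished initial data: a large constant $M$ (which, because $\sup_X|F|<\infty$, is a supersolution, so $u_k^M$ is nonincreasing) and a large negative constant $-M$ (a subsolution, so $u_k^{-M}$ is nondecreasing). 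By Theorem~\ref{th:boundedness} both sequences are uniformly bounded, hence they converge pointwise to functions $\overline{u} \ge \underline{u}$, each of which is easily checked to be a solution of \eqref{eq:DPP} once uniform convergence is established. Any orbit from bounded $u_0$ is squeezed, for $k$ large, between translates of these two, so the whole theorem reduces to: (i) $\overline u$ and $\underline u$ are the same function, and (ii) the monotone convergence is in fact uniform.

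The key quantitative step, and the place where the positivity of $f$ is used, is a decay estimate for $\sup_X (u_k^{-M} - u_k^{M})$ (or more generally $\osc$ between any two sub/supersolution orbits). Write $w_k = u_k^{M} - u_k^{-M} \ge 0$; subtracting the two DPPs and using that $\sup$ and $\inf$ are $1$-Lipschitz and monotone one gets, for $x \in Y$,
\[
 w_{k+1}(x) \le \mu \sup_{B_\eps(x)} w_k + (1-\mu)\inf_{B_\eps(x)} w_k,
\]
and $w_{k+1} = 0$ on $X\setminus Y$. A naive bound gives only $\sup_X w_{k+1} \le \sup_X w_k$, which is not enough. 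The improvement comes from a "there is always a cheap escape route" argument: because $f(x) \ge \delta := \inf_Y f > 0$, the true solution-values cannot sit at a local plateau — along the chain of balls realizing the suprema, one is forced to lose a definite amount $\sim \delta$ each step unless one quickly reaches the boundary $X\setminus Y$ where $w=0$. Making this precise is exactly the content of the game-tree construction announced in the abstract: one builds a finite tree of radius-$\eps$ moves, controlled in depth by the boundedness constant $C$ divided by $\delta$, along which $w_k$ is forced down to $0$. This yields an estimate of the form $\sup_X w_{k+N} \le \theta\, \sup_X w_k$ for some $N = N(\mu,\delta,C)$ and $\theta = \theta(\mu) < 1$, hence $\sup_X w_k \to 0$ geometrically along the subsequence $k \in N\N$, and then for all $k$ by monotonicity of $w_k$. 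I expect this tree/escape-depth estimate — bounding the number of steps needed and turning it into a genuine contraction factor — to be the main obstacle; everything else is bookkeeping.

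Granting the decay of $\sup_X w_k$, the proof finishes quickly. Since $u_k^{-M} \le u_k^{M}$ with the gap tending to $0$ uniformly, and each sequence is monotone, both converge uniformly to a common limit $u := \overline u = \underline u$, which is therefore a solution of \eqref{eq:DPP}. For an arbitrary bounded initial datum $u_0$, choose $M$ with $-M \le u_0 \le M$ on $X$; by order preservation $u_k^{-M} \le u_k \le u_k^{M}$ for all $k$, so $\sup_X |u_k - u| \le \sup_X w_k \to 0$, i.e. $u_k \to u$ uniformly. This also gives uniqueness of the solution as a byproduct: any solution is its own orbit and bounded (by Theorem~\ref{th:boundedness} applied to both $u$ and $-u$), hence equals $u$. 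I would remark that the argument never uses continuity of $f$, $F$, or $u_0$, nor any regularity of $\partial Y$, exactly as advertised; the only structural input beyond boundedness is that from every point of $Y$ one can reach $X\setminus Y$ along an $\eps$-chain, which is where the admissibility hypotheses on the balls $B(x)$ (Definition~\ref{def:admissible}) enter.
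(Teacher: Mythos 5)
Your reduction to monotone orbits from constant initial data is fine and matches the spirit of the paper (the paper starts its iteration from the subsolution $u_0=\inf_X F$, whose orbit is nondecreasing). The gap is in the key quantitative step, and it is not just a detail you are deferring: the mechanism you propose for using $\inf_Y f>0$ cannot work in the form you wrote it.

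First, the inequality
\[
w_{k+1}(x) \le \mu \sup_{B_\eps(x)} w_k + (1-\mu)\inf_{B_\eps(x)} w_k
\]
is false. Subtracting the two DPPs gives
\[
w_{k+1}(x) = \mu\Bigl(\sup_{B_\eps(x)} u_k^M - \sup_{B_\eps(x)} u_k^{-M}\Bigr) + (1-\mu)\Bigl(\inf_{B_\eps(x)} u_k^M - \inf_{B_\eps(x)} u_k^{-M}\Bigr),
\]
and while $\sup_B g - \sup_B h \le \sup_B(g-h)$ is true, the analogous $\inf_B g - \inf_B h \le \inf_B(g-h)$ is not (take $g=(1,0)$, $h=(0,1)$ on a two-point ball). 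The correct one-sided bound is only $\inf_B g - \inf_B h \le \sup_B(g-h)$, so all you get is $w_{k+1}(x)\le \sup_{B_\eps(x)} w_k$, which has $w_k\equiv c$ as a fixed point and yields no decay. Second, and more structurally, the running cost $f$ cancels when you subtract the two DPPs, so the $w_k$-iteration contains no trace of $\inf_Y f>0$; the ``cheap escape route'' heuristic has nothing to bite on once you have passed to the difference. That is why the paper does not apply the game-tree argument to a difference. It expands $u_{k+L}(x)$ itself as a game-tree sum $w(x,T^\ast,(x_t),u_k)$; the $f$-terms survive in that expansion, and combined with the uniform bound on $u_{k+L}$ from Theorem~\ref{th:boundednessGen} they force the tree-sparsity estimate \eqref{eq:titer}. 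Proposition~\ref{pr:depthest} then says the total $\mu^{l(t)}(1-\mu)^{r(t)}$-weight of depth-$L$ leaves is small for $L$ large, which is exactly the weight of the terms still involving $u_k$; this gives $\sup_k\sup_X|u_{k+L}-v_L|\le\delta$ for a reference sequence $v_L$ that is independent of $u_0$, and the theorem follows by a triangle-inequality/semigroup argument (Lemma~\ref{la:convergencegen}). So if you want to keep your two-orbit framework, the fix is to run the tree argument on $u_k^M$ and $u_k^{-M}$ separately against the same reference $v_L$ and then use the triangle inequality, rather than trying to extract a contraction for $w_k$ from a DPP that $w_k$ does not actually satisfy.
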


Technically, in order to prove Theorem~\ref{th:uniformconvergence}, we introduce the concept of Game Trees, which encode the optimal game progression of two players which want to maximize and minimize the value function $u$, respectively. To our best knowledge this is a new approach.

Then, an estimate reminiscent of a comparison principle for game-trees, Proposition~\ref{pr:depthest}, and an argument reminiscent of semi-group properties, Lemma~\ref{la:convergencegen}, are used. All the arguments are completely elementary and eventually rely on iteration estimates for sequences and series.
\begin{figure}
\centering
\includegraphics[width=130mm]{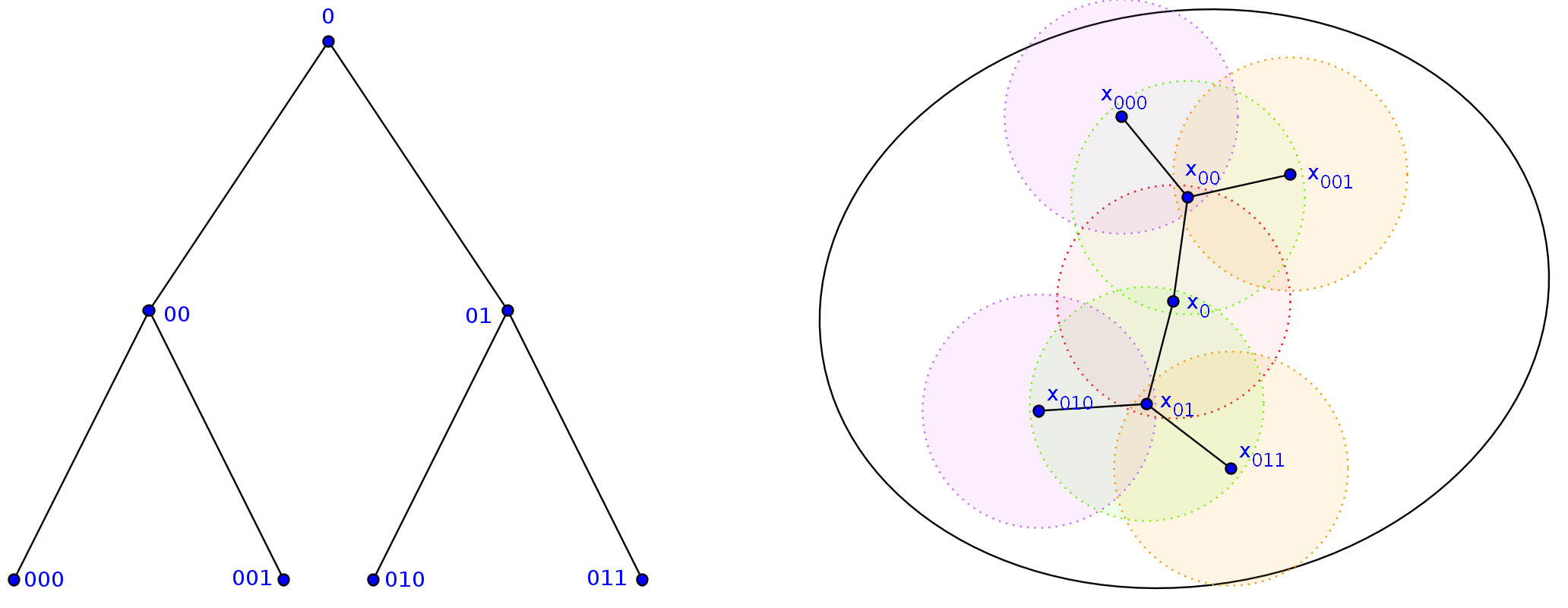}
\caption{A binary tree $T$ and a sequence $(x_t)_{t \in T}$. The circles denote $B(x_t)$. Philosophically, the sequence $(x_t)$ encodes the strategies of two players.}
\label{fig:trees}
\end{figure}

Let us remark on other known approaches to obtain existence and/or uniqueness of solutions $u : X \to \R$ to DPPs related to tug-of-war games: 
There is a stochastic game argument, relying on Kolmogorov's Theorem of probability measures on infinite dimensional spaces, cf. \cite{PPS10,PSSW09}. See also the stochastic game interpretation for the $p$-Laplacian in \cite{PS,MPR1}.
Another approach for existence, in \cite{QAPerron} we extend Perron's method to the discrete setting. Note, however, that our argument here is constructive using a flow.
In \cite{GA98,G07}, using also an iteration technique, they obtained continuous solutions to a modified DPP with shrinking balls near the boundary. In \cite{ArmstrongSmart1,ArmstrongSmart2}, existence and uniqueness for a related situation is obtained. Their DPP is mollified towards the boundary, and some of their arguments rely on semicontinuity of sub- or supersolutions.

For our situation, where we do not mollify the DPP towards the boundary, one cannot expect even semicontinuity, as the two following examples show. 
\begin{example}\label{ex:noncontinuity}Let $X = \R$, $Y = (0,2)$, $\mu = \frac{1}{2}$, $\eps = 1$. Define $F(x) = 0$ on $(-\infty,0]$, and $F(x) = 1$ on $[2,\infty)$. 

For $f \equiv 0$ and $f \equiv 1$, there are respective functions $u_0,u_1: X \to \R$ which are not semi-continuous and nevertheless satisfy
\[
 u(x) = \begin{cases}
         \frac{1}{2} \sup\limits_{(x-1,x+1)} u + \frac{1}{2} \inf\limits_{(x-1,x+1)} u + f\quad &\mbox{in $(0,2)$},\\
         u = 0 \quad &\mbox{in $(-\infty,0]$},\\
         u = 12 \quad &\mbox{in $[2,\infty)$}.
        \end{cases}
\]

\begin{itemize}
\item For $f \equiv 0$, we take
\[
 u_0(x) = \begin{cases}
         4 \quad &x \in (0,1),\\
         6 \quad &x = 1,\\
         8 \quad &x \in (1,2),\\
        \end{cases}
\]
\item if $f \equiv 1$, we take
\[
 u_1(x) = \begin{cases}
         6 \quad &x \in (0,1),\\
         9 \quad &x = 1,\\
         10 \quad &x \in (1,2).\\
        \end{cases}
\]
\end{itemize}
\end{example}

Generally, obtaining uniqueness is more difficult than obtaining existence, but here it follows from uniform convergence.

\begin{corollary}[Uniqueness]
For any $F: X \to \R$, $f: X \to \R$ satisfying
\[
 \sup_X |F| + \sup_X |f|  < \infty,
\]
and
\[
 \inf_Y f > 0,
\]
there is exactly one solution to \eqref{eq:DPP}.
\end{corollary}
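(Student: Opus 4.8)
The plan is to read off both existence and uniqueness from Theorem~\ref{th:uniformconvergence}, exploiting that the limit $u$ furnished there is the \emph{same} for every bounded initial datum $u_0$. For existence, I would fix any bounded $u_0$ (e.g.\ $u_0 \equiv 0$), form the iteration \eqref{eq:iteration}, and let $u$ be its uniform limit, which exists by Theorem~\ref{th:uniformconvergence}. That $u$ solves \eqref{eq:DPP} then follows by passing to the limit in \eqref{eq:iteration}: for $x \in X\setminus Y$ one has $u_k(x) = F(x)$ for every $k \ge 1$, hence $u(x) = F(x)$; for $x \in Y$ one uses $\big|\sup_{B_\eps(x)} u_k - \sup_{B_\eps(x)} u\big| \le \|u_k - u\|_{L^\infty(X)}$ and the analogous bound for the infimum, so that uniform convergence permits letting $k\to\infty$ in $u_{k+1}(x) = \mu\sup_{B_\eps(x)} u_k + (1-\mu)\inf_{B_\eps(x)} u_k + f(x)$. (Incidentally $u$ is bounded, since $u_1$ is and $u_k\to u$ uniformly.)

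For uniqueness, let $v: X\to\R$ be any solution of \eqref{eq:DPP}. By Definition~\ref{def:subsolution} it is simultaneously a sub- and a supersolution, and by Theorem~\ref{th:boundedness}/Theorem~\ref{th:boundednessGen} it is bounded, $\|v\|_{L^\infty(X)}<\infty$. I would then run \eqref{eq:iteration} with $u_0 := v$: since $v$ satisfies \eqref{eq:DPP} with equality, substituting it into the right-hand side of \eqref{eq:iteration} returns $v$ unchanged, so $u_k = v$ for every $k$. This sequence meets the hypothesis $\sup_X|u_0| = \|v\|_{L^\infty(X)} < \infty$ of Theorem~\ref{th:uniformconvergence}, hence converges uniformly to the function $u$ of that theorem --- which by datum-independence is exactly the $u$ built in the existence step. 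As the sequence is constantly $v$, this forces $v = u$; since $v$ was arbitrary, \eqref{eq:DPP} has precisely one solution.

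The step that needs the most care is not this deduction --- which is essentially assembly --- but the boundedness claim for a \emph{given} solution: the ``in particular'' part of Theorem~\ref{th:boundedness} presupposes that the relevant extremum of $v$ is already finite, so to apply it to an a priori arbitrary $v: X\to\R$ one must first rule out $\sup_X v = +\infty$ and $\inf_X v = -\infty$. This is where the finiteness of $\diam(X,d)$ (and the termination encoded in the admissible sets $B$ of Definition~\ref{def:admissible}) enters, as in Section~\ref{s:bounded}; everything else --- passing to the limit in the iteration, the stationarity of \eqref{eq:iteration} on solutions, and the invocation of uniform convergence --- is routine once Theorems~\ref{th:boundedness} and~\ref{th:uniformconvergence} are available.
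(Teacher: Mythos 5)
Your proof is exactly the paper's: plug any solution $\tilde{u}$ into the iteration \eqref{eq:iteration}, observe that it is a fixed point so $u_k \equiv \tilde{u}$ for all $k$, and use the initial-datum-independence of the uniform limit in Theorem~\ref{th:uniformconvergence} to identify $\tilde{u}$ with $u$. You additionally, and correctly, make explicit two points the paper leaves implicit: that the uniform limit $u$ is indeed a solution (verified by passing to the limit via $\left|\sup_{B_\eps(x)} u_k - \sup_{B_\eps(x)} u\right| \leq \sup_X |u_k - u|$ and the analogous infimum bound), and that invoking Theorem~\ref{th:uniformconvergence} with $u_0 = \tilde{u}$ tacitly requires $\sup_X|\tilde{u}| < \infty$ --- a hypothesis the paper's own proof of the corollary also takes for granted without comment.
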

\begin{proof}
Let $u$ be the solution from Theorem~\ref{th:uniformconvergence}, and let $\tilde{u}$ be any other solution. Starting the iteration \eqref{eq:iteration} with $u_0 := \tilde{u}$, and thus $u_k = \tilde{u}$, we obtain from Theorem~\ref{th:uniformconvergence}
\[
 0 = \lim_{k \to \infty} \sup_X |u - u_k| = \sup_X |u - \tilde{u}|.
\]
\end{proof}

In \cite{QAPerron} we obtain a comparison principle for more general DPPs, for \emph{strict} super -and subsolutions. Generally we remarked there, that uniqueness is equivalent to a comparison principle for all super- and subsolutions, so it is not surprising that we have

\begin{corollary}[Comparison Principle]
Given $F: X \to \R$, $f: X \to \R$ satisfying
\[
 \sup_X |F| + \sup_X |f|  < \infty,
\]
and
\[
 \inf_Y f > 0,
\]
let $\underline{v}$ be a subsolution, and $\bar{v}$ be a supersolution to \eqref{eq:DPP} in the sense of Definition~\ref{def:subsolution} and $\sup_X |\underline{v}| + \sup_X |\bar{v}| < \infty$. Then, pointwise $\underline{v} \leq \bar{v}$.
\end{corollary}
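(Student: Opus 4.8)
The plan is to deduce the comparison principle from the uniqueness statement (the preceding Corollary) together with the monotonicity of the iteration remarked on in the introduction. First I would recall the two observations already recorded before Theorem~\ref{th:boundedness}: if the starting function $u_0$ in the iteration \eqref{eq:iteration} is a subsolution, then the iterates increase pointwise, $u_k \leq u_{k+1}$ for all $k$; and if $u_0$ is a supersolution, the iterates decrease pointwise, $u_{k+1} \leq u_k$. These follow immediately by induction: plugging $u_k \geq u_{k-1}$ (resp. $\leq$) into the right-hand side of \eqref{eq:iteration} and using monotonicity of $\sup_{B_\eps(x)}$ and $\inf_{B_\eps(x)}$ in the argument, while on $X\setminus Y$ both sides equal $F$.

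Next I would run the iteration \eqref{eq:iteration} twice: once starting from $u_0 := \underline{v}$, obtaining iterates $\underline{v}_k$, and once starting from $u_0 := \bar{v}$, obtaining iterates $\bar{v}_k$. Since $\sup_X|\underline{v}| + \sup_X|\bar{v}| < \infty$ and $f, F$ satisfy the hypotheses of Theorem~\ref{th:uniformconvergence}, both sequences converge uniformly, and — this is the key point — they must converge to the \emph{same} limit $u$, namely the unique solution to \eqref{eq:DPP}, because Theorem~\ref{th:uniformconvergence} asserts uniform convergence to a function $u$ that does not depend on the choice of bounded $u_0$. Thus $\underline{v}_k \to u$ and $\bar{v}_k \to u$ pointwise (indeed uniformly).

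Finally I would combine monotonicity with these limits. Since $\underline{v}$ is a subsolution, $\underline{v} = \underline{v}_0 \leq \underline{v}_k \to u$, so $\underline{v} \leq u$ pointwise; since $\bar{v}$ is a supersolution, $\bar{v} = \bar{v}_0 \geq \bar{v}_k \to u$, so $\bar{v} \geq u$ pointwise. Chaining these, $\underline{v} \leq u \leq \bar{v}$, which gives the claim. The only genuine subtlety — and the step I would be most careful about — is making sure the monotonicity of the iterates applies as stated: one must check that a subsolution $\underline{v}$ with $\sup_X|\underline{v}| < \infty$ really does generate an \emph{increasing} sequence under \eqref{eq:iteration}, which requires that the supersolution/subsolution inequalities in Definition~\ref{def:subsolution} are compatible with the inequality direction produced by iterating, and that finiteness of $\sup_X \underline{v}$ (together with boundedness of $f$) keeps all quantities finite so that Theorem~\ref{th:uniformconvergence} is genuinely applicable. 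Everything else is a soft limiting argument. I would also note that the hypothesis excluding $u \equiv \pm\infty$ is automatic here because of the assumed finiteness of $\sup_X|\underline{v}|$ and $\sup_X|\bar v|$.
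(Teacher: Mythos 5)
Your proposal is correct and follows essentially the same route as the paper: start the iteration \eqref{eq:iteration} from $\underline{v}$ and from $\bar{v}$, use the monotonicity of the iterates (increasing from a subsolution, decreasing from a supersolution), and invoke Theorem~\ref{th:uniformconvergence} to conclude that both sequences converge to the same limit $u$, giving $\underline{v}\leq u\leq\bar{v}$. The only cosmetic difference is that you phrase the key input as uniqueness, whereas the paper appeals directly to the fact that Theorem~\ref{th:uniformconvergence} produces a limit independent of the bounded starting function; these are the same observation.
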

\begin{proof}
Start the iteration \eqref{eq:iteration} $\underline{v}_k$, $\bar{v}_k$ from $\underline{v}$ and $\bar{v}$, respectively. Then $\underline{v}_k \leq \underline{v}_{k+1}$ and $\bar{v}_k \geq \bar{v}_{k+1}$. In particular,
\[
 \bar{v}(x) \geq \limsup_{k \to \infty} \bar{v}_k(x),
\]
and
\[
 \underline{v}(x) \leq \liminf_{k \to \infty} \underline{v}_k(x).
\]
On the other hand, by Theorem~\ref{th:uniformconvergence}, there exists a solution $u: X \to \R$ and
\[
\underline{v}(x) \leq \lim_{k \to \infty} \underline{v}_k (x) =  u(x) = \lim_{k \to \infty} \bar{v}_k (x)  \leq \bar{v}(x).
\]
\end{proof}
For the tug-of-war DPP without running costs \cite{AS} showed a comparison principle under continuity assumptions. In our case, we do not assume any regularity at all, and as Example~\ref{ex:noncontinuity} shows, one cannot hope for even lower- or upper semicontinuity for sub- or supersolutions.

Our arguments and theorems hold true on more general spaces than described above, indeed we are going to show them for the following setting (where $B(x)$ replaces the role of $B_\eps(x)$).
\begin{definition}[Admissible Setups]\label{def:admissible}

Let $X$ be a set, and $Y \subset X$. Moreover associate to any $x \in Y$ a set $B(x) \subset X$. We say that the collection 
\[ \left (X, Y, \{B(x),x \in Y\} \right )
\] is admissible, if the following holds

\begin{itemize}
 \item $X$, $X \backslash Y$, $Y$ and $B(x)$ are nonempty for all $x \in Y$,
 \item There exists a finite integer, which we shall call the diameter of $X$ and denote by $\diam X \in \N$, such that for any $x \in Y$ there exists $d = d(x) < \diam X$ and a chain of $(x_i)_{i = 0}^{d} \in X$, such that $x_0 = x$ and $x_{d} \in X \backslash Y$, and $x_i \in B(x_{i-1})$ for all $i \in \{1,\ldots,d\}$.
\end{itemize}

\begin{figure}
\centering
\includegraphics[width=130mm]{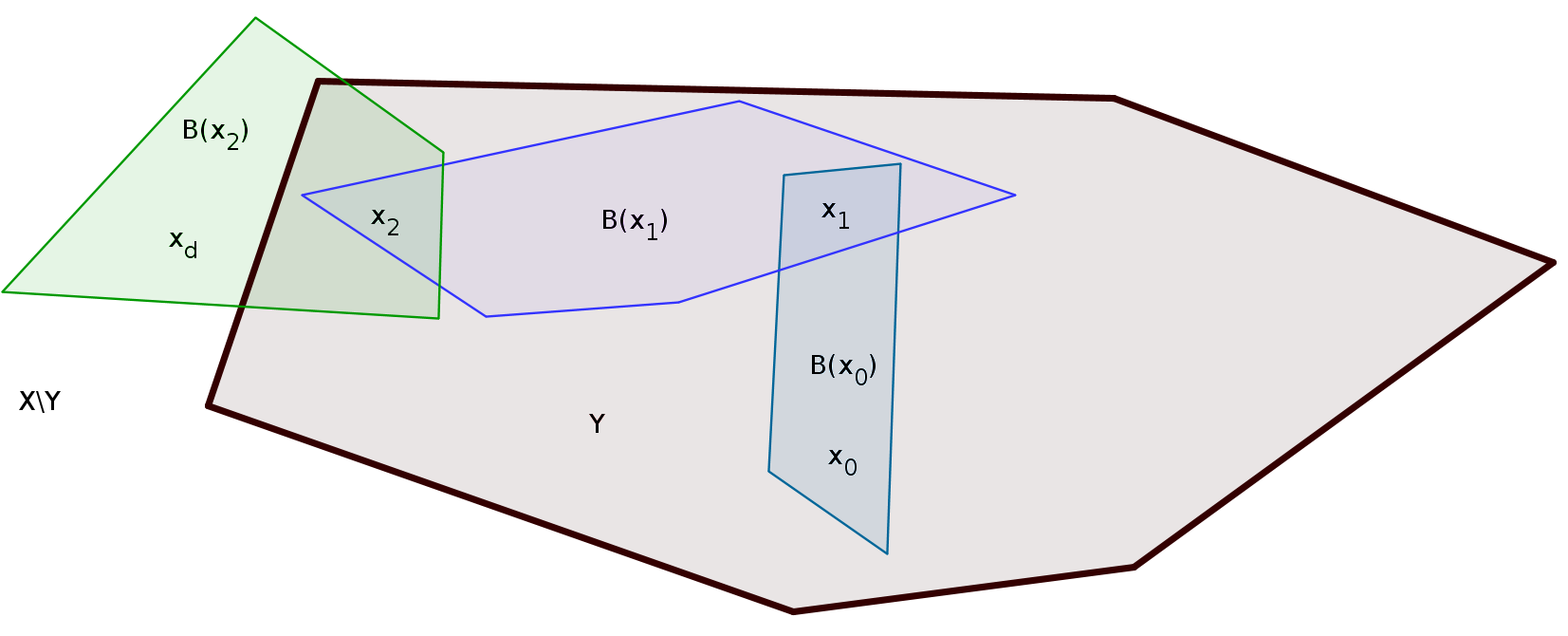}
\caption{One possible choice of $B(x)$ connecting $x_0$ to $x_d$}
\end{figure}

\end{definition}
\begin{remark}
Note that in particular we do not need symmetry-conditions such as $x \in B(y) \Rightarrow y \in B(x)$.

Also, it is a straight-forward generalization of our arguments to use two different families of ``balls'' $B(x)$, one for the $\sup$-term and another for the $\inf$-term, which could be completely different. For the sake of simplicity of notation, we leave this as an exercise.

\subsection*{Acknowledgment} We would like to thank Juan Manfredi, Marta Lewicka, Yoshikazu Giga for their interest in our results and helpful comments on the draft. Also we would like to thank Scott Armstrong to point out the relations to \cite{ArmstrongSmart1,ArmstrongSmart2}. The graphics have been done using GeoGebra\footnote{www.geogebra.org}.
\end{remark}

\section{Iteration Estimates and Trees}
In this section we first prove some estimates on sequences and series, and then introduce some terminology on the trees we are using. The arguments in this section are quite elementary, albeit not obvious. 

The main theme of this section could be described as adapting the following well-known iteration argument, cf., e.g., \cite[Chapter III, Lemma 2.1.]{GiaquintaMI}, to our needs.
\begin{proposition}\label{pr:standarditeration}
Let $(a^k)_{k \in \N}$ be a non-negative sequence with the rule that for some $\theta \in (0,1)$, $\Lambda > 0$ we have
\[
 a^{k+1} \leq \theta\ a^k + \Lambda.
\]
Then, for a constant $C$ depending only on $\theta$ and $\Lambda$, 
\[
 a^{k} \leq C a^0 + C,
\]
and
\[
 \limsup_{k \to \infty} a^{k} \leq C.
\]
\end{proposition}

\subsection{Iteration on Systems}
Our first proposition can be described as a version of Proposition~\ref{pr:standarditeration} for systems of sequences.
\begin{proposition}\label{pr:akalphaest}
For any $d \in \N$, any $\Lambda > 0$ and any $\mu \in (0,1)$ there exists a constant $C = C(d,\Lambda,\mu) > 0$ such that for any sequence $(a^k_\alpha)_{\alpha = 0,\ldots,d; k \in \N} \subset \R_+$ satisfying
\begin{equation}\label{eq:akp1iteration}
 a^{k+1}_\alpha \leq \begin{cases}
                 \Lambda \quad & \mbox{if }\alpha = 0,\\
                 \mu a_d^k + (1-\mu) a_{\alpha - 1}^k + \Lambda \quad & \mbox{if }\alpha = 1,\ldots,d.
                 \end{cases}
\end{equation}
we have
\[
 \max_{\alpha} \limsup_{k \to \infty} a^k_\alpha \leq C.
\]
\end{proposition}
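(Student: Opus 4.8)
The plan is to reduce this to the scalar iteration argument of Proposition~\ref{pr:standarditeration} by finding a single quantity that contracts. The natural candidate is $A^k := \max_{\alpha} a^k_\alpha$, but the recursion for $a^{k+1}_\alpha$ with $\alpha = d$ involves $a^k_d$ itself with weight $\mu$, and $a^k_{d-1}$ with weight $1-\mu$, so a single-step estimate only gives $A^{k+1} \le A^k + \Lambda$, which does not contract. The key observation is that one must look $d$ steps ahead: iterating the inequality for $\alpha = d$ down the chain $d \to d-1 \to \cdots \to 1 \to 0$, the term $a^k_0$ is controlled by $\Lambda$ (the $\alpha = 0$ case), so after $d$ steps the ``$(1-\mu)$-part'' of the weight has been pushed all the way to the boundary index $0$ and contributes only a bounded amount, while the ``$\mu$-part'' accumulates a factor that I will show is strictly less than $1$ after enough steps.

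Concretely, I would first establish by induction on $\alpha$ that for each $\alpha \in \{0,1,\dots,d\}$ and each $k$,
\[
 a^{k+\alpha+1}_\alpha \le \Bigl(1 - (1-\mu)^{\alpha+1}\Bigr)\, \sup_{j \le k+\alpha} a^j_d \;+\; C_1(\alpha,\mu,\Lambda),
\]
or a similar clean bound: the point is that unwinding the recursion $a^{k+1}_\alpha \le \mu a^k_d + (1-\mu) a^k_{\alpha-1} + \Lambda$ repeatedly in the second subscript terminates at $a^{\,\cdot}_0 \le \Lambda$, and each descent step multiplies the ``dangerous'' $a_d$-coefficient by at most $\mu$ and adds a $(1-\mu)$-weighted contribution. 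Applying this with $\alpha = d$ yields, for $\theta := 1 - (1-\mu)^{d+1} \in (0,1)$ and a suitable constant $\Lambda'$,
\[
 \sup_{j \le k + d + 1} a^j_d \;\le\; \theta\, \sup_{j \le k} a^j_d \;+\; \Lambda' \qquad \text{(up to harmless book-keeping of the finitely many initial terms)}.
\]
Passing to the subsequence indexed by multiples of $d+1$ and applying Proposition~\ref{pr:standarditeration} to $a^m := \sup_{j \le m(d+1)} a^j_d$ gives $\limsup_{k \to \infty} a^k_d \le C$. Then the bound above for each $\alpha$ immediately upgrades this to $\limsup_{k\to\infty} a^k_\alpha \le C$ for all $\alpha$, which is the claim. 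I would need to be slightly careful that $\sup_{j \le m} a^j_d$ is finite for each fixed $m$ — this follows since $a^0_d$ is part of the given data (the $a^k_\alpha$ are real numbers) and the recursion propagates finiteness.

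The main obstacle is purely combinatorial: keeping the bookkeeping of the accumulated additive constants and the shifting time-indices under control when unwinding the $d$-fold recursion, and in particular handling the fact that the recursion mixes a ``vertical'' descent in $\alpha$ (from $d$ down to $0$) with a ``horizontal'' advance in $k$. A cleaner variant, which I would try first, is to avoid the running supremum altogether by introducing $b^k := \max_{0 \le \alpha \le d} (1-\mu)^{-\alpha} a^{k+\alpha}_\alpha$ or a similar weighted max-over-a-diagonal quantity, chosen precisely so that the system recursion \eqref{eq:akp1iteration} collapses into a single scalar inequality $b^{k+1} \le \theta b^k + \Lambda''$ with $\theta < 1$; then Proposition~\ref{pr:standarditeration} applies directly and the conclusion for each $a^k_\alpha$ is read off from the definition of $b^k$. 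Either way, no step is deep — the content is entirely in finding the right scalar surrogate and then invoking the already-proved scalar lemma.
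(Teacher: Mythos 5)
Your approach --- unwinding the recursion $d$ steps down the index $\alpha$ until the $(1-\mu)$-weight reaches the bounded boundary term $a_0$, then iterating a scalar surrogate over blocks of length $d$ --- is conceptually different from the paper's, which instead forms the weighted sum $b^k := \sum_{\alpha=1}^{d}\lambda_\alpha a^k_\alpha$ with $\lambda_\alpha = (\tau(1-\mu))^{d-\alpha}$ and shows this contracts already after a single step. The unwinding idea can be made to work, but the intermediate inequality you write,
\[
 \sup_{j \le k+d+1} a^j_d \;\le\; \theta\, \sup_{j \le k} a^j_d \;+\; \Lambda',
\]
is a genuine problem, not ``harmless bookkeeping'': the left-hand side is a supremum over a strictly larger index set, so it is automatically $\ge \sup_{j\le k}a^j_d$, and the inequality would already force $\sup_{j\le k}a^j_d\le\Lambda'/(1-\theta)$, which is exactly the boundedness you are trying to establish. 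Likewise the proposed surrogate $a^m := \sup_{j\le m(d+1)}a^j_d$ is monotone non-decreasing, so a $\theta$-contraction with $\theta<1$ cannot hold for it unless it is already bounded, and invoking Proposition~\ref{pr:standarditeration} there is circular.

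What the unwinding actually yields, for $m\ge d+1$, is only a bound in terms of the last $d$ indices:
\[
 a^{m}_d \;\le\; \bigl(1-(1-\mu)^d\bigr)\,\max_{m-d\le j\le m-1} a^j_d \;+\; \Lambda'.
\]
To turn this into a contraction compatible with Proposition~\ref{pr:standarditeration} one has to pass to a blockwise maximum such as $S_m := \max_{md\le j<(m+1)d} a^j_d$ and handle, by a short induction within each block, the fact that the window for $a^j_d$ with $j$ in block $m+1$ also reaches back into block $m+1$ itself; one then gets $S_{m+1}\le\max\bigl(\theta S_m+\Lambda',\ \Lambda'/(1-\theta)\bigr)$, which suffices. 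This is valid but noticeably more intricate than the paper's one-step estimate for the weighted sum. Your ``cleaner variant'' $b^k:=\max_\alpha(1-\mu)^{-\alpha}a^{k+\alpha}_\alpha$ also does not close as stated: applying \eqref{eq:akp1iteration} to $a^{k+1+\alpha}_\alpha$ produces the term $(1-\mu)^{-(\alpha-1)}a^{k+\alpha}_{\alpha-1}$, which lies on the diagonal defining $b^{k+1}$, not $b^k$, so the intended scalar recursion is self-referential rather than a step-by-step contraction.
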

\begin{proof}
We are going to show for $d \geq 2$ that for a certain choice of $\lambda_\alpha > 0$, setting
\[
  b^k := \sum_{\alpha=1}^d \lambda_\alpha a^k_\alpha,
\]
there is $\theta \in (0,1)$ such that
\begin{equation}\label{eq:bkp1wish}
 b^{k+1} \leq \theta b^k + C_0.
\end{equation}
This implies the claim, since by Proposition~\ref{pr:standarditeration} from the above we obtain for a constant $C_1$ depending on $\theta$ and $C_0$,
\[
 \sup_{k} b^k \leq C_1 (1 + b_0) < \infty.
\]
In particular, $\limsup_{k \to \infty} b^k < \infty$, and thus
\[
 \limsup_{k \to \infty} b^{k} \leq \theta \limsup_{k \to \infty} b^k + C_0
\]
implies that
\[
 \limsup_{k \to \infty} b^{k} \leq \frac{C_1}{1-\theta},
\]
and thus
\[
 \max_{\alpha} \limsup_{k \to \infty} a^k_\alpha \leq \frac{C_1}{(1-\theta)\ \min_\alpha \{\lambda_\alpha\}} =: C.
\]
If $d = 1$, we have $b^k = a^k_d$, and the iteration
\[
a^{k+1}_d \leq \mu a_d^k + (2-\mu) \Lambda \quad  \mbox{if }\alpha = 1,\ldots,d,
 \]
which allows for the same argument as above.

It remains to pick for $d \geq 2$ some $\lambda_\alpha$ such that \eqref{eq:bkp1wish} holds.

We have from \eqref{eq:akp1iteration}
\begin{align*}
&b^{k+1} = \sum_{\alpha=1}^d \lambda_\alpha\ a^k_\alpha\\
&\leq \sum_{\alpha=1}^{d} \lambda_\alpha\ \brac{\mu a_d^k + (1-\mu) a_{\alpha - 1}^k} + \lambda_0 \Lambda\\
&\leq \mu\ \sum_{\alpha=1}^{d} \lambda_\alpha\ a_d^k + \sum_{\alpha=1}^{d-1} \lambda_{\alpha+1} (1-\mu) a_{\alpha}^k + \lambda_{1} (1-\mu) \Lambda + \sum_{\alpha=1}^d \lambda_\alpha \Lambda\\
&\leq \theta b^k + C\ \Lambda,
\end{align*}
where
\[
 \theta := \max_{\alpha = 1,\ldots,d-1} \left \{\mu\ \brac{1+\sum_{\beta=1}^{d-1} \frac{\lambda_\beta}{\lambda_d}} , (1-\mu) \frac{\lambda_{\alpha+1}}{\lambda_{\alpha}} \right \}.
\]
We need to show that $\theta < 1$: Recall that $d \geq 2$, and thus there is $\tau > 1$ satisfying
\[
     1 < \tau^{d-1} < \frac{\sum_{\beta =0}^{\infty} (1-\mu)^{\beta}}{\sum_{\beta =0}^{d-2} (1-\mu)^{\beta}} = \brac{\mu\ \sum_{\beta =0}^{d-2} (1-\mu)^{\beta}}^{-1}.
\]
Now we can pick our $\lambda_\alpha$:
\[
 \lambda_\alpha = (\tau (1-\mu))^{d-\alpha} \quad \alpha = 1,\ldots,d.
\]
For this choice, certainly
\[
 (1-\mu) \frac{\lambda_{\alpha+1}}{\lambda_{\alpha}} = \frac{1}{\tau} < 1 \quad \mbox{for $\alpha = 1,\ldots,d-1$}.
\]
On the other hand, by the choice of $\tau$,
\begin{align*}
 \frac{\mu}{\mu-1}\sum_{\alpha=1}^{d-1} (\tau (1-\mu))^{d-\alpha} 
 = &\mu\ \sum_{\alpha=1}^{d-1} \tau^{d-\alpha}\ (1-\mu)^{d-\alpha-1}\\
 \leq & \tau^{d-1}\ \mu \sum_{\beta =0}^{d-2} (1-\mu)^{\beta} < 1.
 \end{align*}
This implies that $\theta < 1$ -- which was all that remained to show.
\end{proof}

\subsection{Trees, Sequences, and Iteration}\label{ss:trees}
For the proof of Theorem~\ref{th:uniformconvergence} we use the language of sequences in binary trees: Let $\T$ be the set of all binary trees of finite length. A \emph{tree} is a rooted, connected, undirected, cycle free graph $G = (T,E,0)$, where $T$ is a finite set of vertices, $0 \in T$ (sometimes $t_0$) is the root, $E \subset T \times T$ is a symmetric relation which describes the edges of $G$. 

Since there are no cycles in the graph, the \emph{depth}, or degree, of a vertex $t$, $d(t)$, defined as the number of vertices needed to connect $t$ with $0$ ($d(0) = 0$) is well-defined. With $d(T)$ we denote the maximal depth in the tree $T$.

The \emph{child} of a node $t \in T$ are all vertices $\tilde{t} \in T$ with $d(\tilde{t}) > d(t)$ and $(\tilde{t},t) \in E$. We say that $t$ is a \emph{leaf} of $T$, $t \in {\rm leaf}(T)$, if it has no children. The \emph{parent} of a vertex $t \in T$, denoted with $\operatorname{par}(t)$, is the unique vertex $\tilde{t} \in T$ such that $t$ is a child of $\tilde{t}$.

We are interested in \emph{strictly binary trees}, that is trees whose vertices $t$ have no or exactly two children $\tilde{t}_1$, $\tilde{t}_2$. Moreover, $T$ has to be such that for any vertex $t$ with children $\tilde{t}_1$, $\tilde{t}_2$ there is a left child, which we denote with $t0$, and a right child, which we denote with $t1$, and say $t0 < t1$. Thus any $t \in T$ can be uniquely described by a sequence $t \in \{0,1\}^{d(t)+1}$ with first entry $0$: See Figure~\ref{fig:trees}. The set of these kind of trees shall be called strictly binary trees, and denoted by $\T_2$. All our trees belong to $\T_2$ from now on.

We also introduce a total ordering of a tree, and say that $\tilde{t} < t$ if $d(\tilde{t}) < d(t)$, or if $d(\tilde{t}) = d(t)$ and $\tilde{t}$ is to the left of $t$, see Figure~\ref{fig:treeordering}.

\begin{figure}
\centering
\includegraphics[height=50mm]{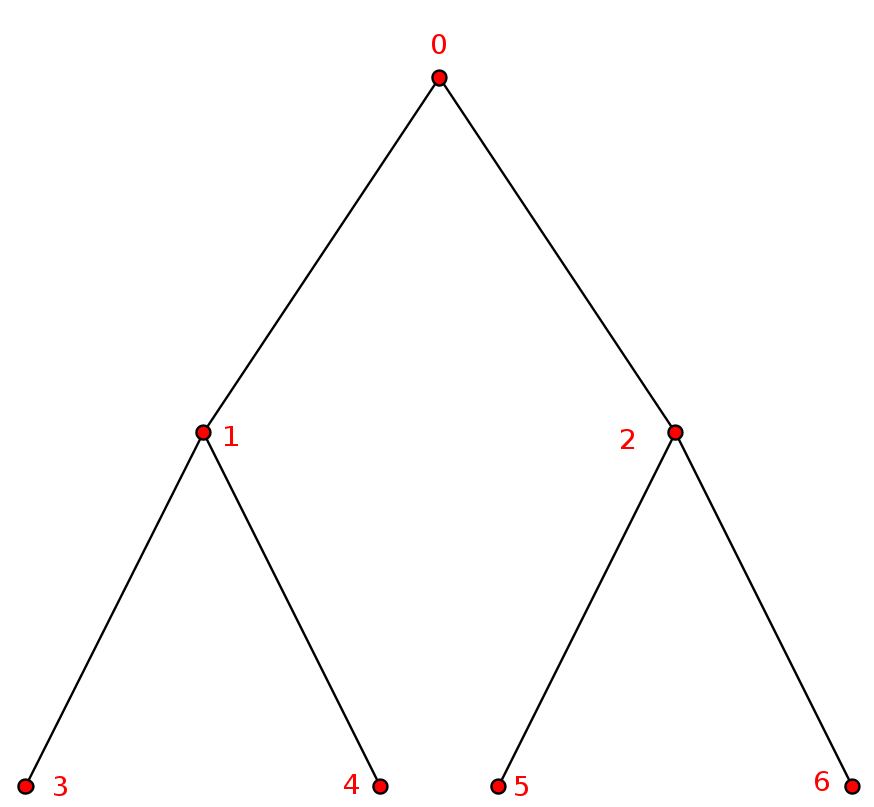}
\caption{The total ordering of trees}
\label{fig:treeordering}
\end{figure}

It will be important for us to know how many left turns $l(t)$ and how many right turns $r(t)$ it needs to reach a vertex $t \in T$ from the root. 
Representing $t$ by a sequence $t \in \{0,1\}^{d(t)+1}$, we have
\[
 l(t) = \#\{0 \in t\} - 1, \quad r(t) = \#\{1 \in t\} = d(t) - l(t).
\]
We will later use $T \in \T_2$ to index sequences $(x_t)_{t \in T} \subset X$, again see Figure~\ref{fig:trees}.

We will need the following estimate, which essentially states that if in a tree $T \in \T_2$ we have a certain relation between all interior nodes and the leafs, then there are relatively few leaves of maximal depth (i.e. in some sense the tree is sparse):
\begin{proposition}\label{pr:depthest}
Given $\mu \in (0,1)$, $C > 0$, for any $\delta > 0$ for $K = \lceil \frac{2C}{\delta} \rceil + 2$ we have the following:

If $T \in \T_2$ satisfies
\begin{equation}\label{eq:trees:sumest}
\sum_{t \not \in {\rm leaf}T} \mu^{l(t)} (1-\mu)^{r(t)}\leq C+ C\ \sum_{t \in {\rm leaf}T} \mu^{l(t)} (1-\mu)^{r(t)},\\
\end{equation}
then
\begin{equation}\label{eq:tree:largeissmall}
 d(T) \geq K\quad \Rightarrow\quad \sum_{d(t) = d(T)} \mu^{l(t)} (1-\mu)^{r(t)} \leq \delta.
\end{equation}
\end{proposition}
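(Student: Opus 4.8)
The plan is to reorganize the weights $\mu^{l(t)}(1-\mu)^{r(t)}$ by depth. For $0\le j\le d(T)$ set
\[
 S_j:=\sum_{d(t)=j}\mu^{l(t)}(1-\mu)^{r(t)},\qquad
 L_j:=\sum_{\substack{d(t)=j\\ t\in{\rm leaf}\,T}}\mu^{l(t)}(1-\mu)^{r(t)},\qquad
 N_j:=S_j-L_j .
\]
The only structural input needed is the \emph{splitting identity}: since $T\in\T_2$, a non-leaf $t$ has exactly two children $t0,t1$ with $(l(t0),r(t0))=(l(t)+1,r(t))$ and $(l(t1),r(t1))=(l(t),r(t)+1)$, so $\mu^{l(t0)}(1-\mu)^{r(t0)}+\mu^{l(t1)}(1-\mu)^{r(t1)}=\mu^{l(t)}(1-\mu)^{r(t)}$. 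Every vertex at depth $j+1$ is the child of a unique vertex at depth $j$, which is necessarily a non-leaf; summing the splitting identity over the non-leaves at depth $j$ therefore gives the recursion
\[
 S_{j+1}=N_j=S_j-L_j,\qquad 0\le j\le d(T)-1 .
\]

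From this one recursion I extract everything. First, $S_0=1$ (the root is the only depth-$0$ vertex and has $l=r=0$), and since $L_j\ge 0$ the sequence $(S_j)_{j=0}^{d(T)}$ is non-increasing; in particular $S_{d(T)}\le S_j$ for every $j\le d(T)$. Second, telescoping $S_{j+1}=S_j-L_j$ and using that every vertex of maximal depth is a leaf (so $N_{d(T)}=0$ and $L_{d(T)}=S_{d(T)}$) yields $\sum_{j=0}^{d(T)}L_j=1$, i.e.\ the ``Kraft equality'' $\sum_{t\in{\rm leaf}\,T}\mu^{l(t)}(1-\mu)^{r(t)}=1$. Third, again because $N_{d(T)}=0$ and $N_j=S_{j+1}$, the non-leaf sum is
\[
 \sum_{t\not\in{\rm leaf}\,T}\mu^{l(t)}(1-\mu)^{r(t)}=\sum_{j=0}^{d(T)-1}N_j=\sum_{j=1}^{d(T)}S_j .
\]

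Now substitute the last two identities into hypothesis \eqref{eq:trees:sumest}: its left-hand side equals $\sum_{j=1}^{d(T)}S_j$ and the leaf-sum on the right equals $1$, so $\sum_{j=1}^{d(T)}S_j\le 2C$. Because $(S_j)$ is non-increasing, the smallest of these $d(T)$ terms is $S_{d(T)}$, hence $d(T)\,S_{d(T)}\le\sum_{j=1}^{d(T)}S_j\le 2C$, i.e.\ $S_{d(T)}\le 2C/d(T)$. Finally, if $d(T)\ge K=\lceil 2C/\delta\rceil+2\ge 2C/\delta$ then
\[
 \sum_{d(t)=d(T)}\mu^{l(t)}(1-\mu)^{r(t)}=S_{d(T)}\le\frac{2C}{d(T)}\le\frac{2C}{K}\le\delta ,
\]
which is \eqref{eq:tree:largeissmall}. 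There is no genuinely hard step here: the whole argument rests on the recursion $S_{j+1}=S_j-L_j$, which at once gives monotonicity of the level-sums, the Kraft equality, and the rewriting of the non-leaf sum as $\sum_{j\ge1}S_j$. The only points demanding a little care are the bookkeeping at maximal depth (every such vertex is a leaf, so $N_{d(T)}=0$) and the index ranges in the telescoping sums; the trivial tree needs no separate discussion since we assume $d(T)\ge K\ge2$.
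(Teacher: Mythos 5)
Your proof is correct and takes essentially the same approach as the paper: both define level sums $a_i = S_j$ and leaf-level sums $b_i = L_j$, derive the key recursion $a_{i+1} = a_i - b_i$ from the splitting identity, and bound $\sum_{j\ge 1} S_j \le 2C$ from the hypothesis. The paper packages this as an algebraic chain and a contradiction argument using $a_i > \delta$, whereas you make the Kraft equality $\sum_{t\in{\rm leaf}T}\mu^{l(t)}(1-\mu)^{r(t)}=1$ and the identity $\sum_{t\not\in{\rm leaf}T}\mu^{l(t)}(1-\mu)^{r(t)}=\sum_{j\ge 1}S_j$ explicit and argue directly, but these are only presentational differences.
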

\begin{proof}
Fix $\mu$, $C$, $\delta$ and $T$. 

Let us abbreviate
\[
 a_i := \sum_{t: d(t) = i} \mu^{l(t)} (1-\mu)^{r(t)},
\]
and
\[
 b_i := \sum_{t: d(t) = i, t \in {\rm leaf}T} \mu^{l(t)} (1-\mu)^{r(t)},
\]
where $a_i, b_i = 0$ if there is no respective node, and in particular $a_i,b_i = 0$ for $i > d(T)$.

First, let us make the following general remarks about $T \in \T_2$: Since $T$ is strictly binary, observe that nodes $d(t) = i$, for $i \in \N$, come always in pairs. In other words, for any $t \in T$, the fact that $t0 \in T$ is equivalent to the fact that $t1 \in T$. In particular, for any $i \geq 1$:
\begin{align*}
 &\sum_{t: d(t) = i} \mu^{l(t)} (1-\mu)^{r(t)}\\
 &=\sum_{t: d(t) = i, t = \tilde{t}0} \mu^{l(t)} (1-\mu)^{r(t)} + \sum_{t: d(t) = i, t = \tilde{t}1} \mu^{l(t)} (1-\mu)^{r(t)}\\
 &=\sum_{t: d(t) = i, t = \tilde{t}0} \mu^{l(\tilde{t})+1} (1-\mu)^{r(\tilde{t})} + \sum_{t: d(t) = i, t = \tilde{t}1} \mu^{l(\tilde{t})} (1-\mu)^{r(\tilde{t})+1}\\
 &=\sum_{\tilde{t}: d(\tilde{t}) = i-1, \tilde{t}0 \in T} \brac{\mu^{l(\tilde{t})+1} (1-\mu)^{r(\tilde{t})} + \mu^{l(\tilde{t})} (1-\mu)^{r(\tilde{t})+1}}\\
 &=\sum_{\tilde{t}: d(\tilde{t}) = i-1, \tilde{t}0 \in T} \mu^{l(\tilde{t})} (1-\mu)^{r(\tilde{t})}\\
&=\sum_{\tilde{t}: d(\tilde{t}) = i-1} \mu^{l(\tilde{t})} (1-\mu)^{r(\tilde{t})} -  \sum_{\tilde{t} \in {\rm leaf} T, d(\tilde{t}) = i-1} \mu^{l(\tilde{t})} (1-\mu)^{r(\tilde{t})},
\end{align*}
that is
\begin{equation}\label{eq:aip1aibirel}
a_{i+1} =a_{i} - b_{i} \quad \mbox{for $i \geq 0$},
\end{equation}
and in particular,
\[
 a_{i+1} \leq a_{i} \quad \mbox{for $i \geq 0$}.
\]
Thus, the assumption
\begin{equation}\label{eq:assumelarge}
 \sum_{d(t) = d(T)} \mu^{l(t)} (1-\mu)^{r(t)} > \delta,
\end{equation}
implies 
\begin{equation}\label{eq:assumelarge2}
 a_i > \delta \quad \forall i \leq d(T).
\end{equation}
Applying \eqref{eq:aip1aibirel} in \eqref{eq:trees:sumest} we obtain
\[
\begin{array}{lrcl}
&\sum\limits_{i=0}^{d(T)} (a_i - b_i) &\leq& C+ C\ \sum\limits_{i=0}^{d(T)} b_i\\[1em]
\Leftrightarrow\quad &\sum\limits_{i=0}^{d(T)} a_{i+1} &\leq& C+ C\ \sum\limits_{i=0}^{d(T)} (a_i - a_{i+1})\\[1em]
\Leftrightarrow& (C+1)\sum\limits_{i=0}^{d(T)} a_{i+1} &\leq& C+ C\ \sum\limits_{i=0}^{d(T)} a_i\\[1em]
\Leftrightarrow& \sum\limits_{i=1}^{d(T)} a_{i} &\leq& \frac{C}{C+1}+ \frac{C}{C+1}\ \sum\limits_{i=0}^{d(T)} a_i\\[1em]
\Leftrightarrow& \frac{1}{C+1}\sum\limits_{i=1}^{d(T)} a_{i} &\leq& \frac{C}{C+1} + \frac{C}{C+1}\ a_0\\[1em]
\Rightarrow& \sum\limits_{i=1}^{d(T)} a_{i} &\leq& C + C\ a_0 \leq 2C\\[1em]
\end{array}
\]
Plugging \eqref{eq:assumelarge2} in this, we have shown that \eqref{eq:trees:sumest} together with \eqref{eq:assumelarge} leads to
\[
 \delta (d(T)-1) \leq 2C.
\]
This is impossible, if $d(T) \geq K = \lceil \frac{2C}{\delta} \rceil + 2$. That is, if $d(T) \geq K$, the opposite of \eqref{eq:assumelarge} is true, which is the claim \eqref{eq:tree:largeissmall}.
\end{proof}

\section{Boundedness: Proof of Theorem~\ref{th:boundedness}}\label{s:bounded}
In this section we are going to show the following generalized version of Theorem~\ref{th:boundedness}
\begin{theorem}[Boundedness]\label{th:boundednessGen}
Let $(X, Y, \{B(x)\})$ be as in Definition~\ref{def:admissible}.
For any $\Lambda > 0$, $\mu \in (0,1)$, there exists a constant $C = C(\mu,\Lambda,\diam X) > 0$ such that the following holds: 

\begin{itemize}
 \item[(i)] for any
$u_k : X \to \R$, $k \in \N_0$, and such that
\[
 \sup_X u_0 < \infty,
\]
and
\begin{equation}\label{eq:ukp1leq}
 \begin{cases}
  u_{k+1}(x) \leq \mu \sup\limits_{X} u_k + (1-\mu) \inf\limits_{B(x)} u_k + \Lambda \quad &\mbox{if $x \in Y$},\\
 u_{k+1}(x) \leq \Lambda \quad &\mbox{if $x \in X \backslash Y$},
  \end{cases}
\end{equation}
we have
\[
 \limsup_{k \to \infty} \sup_X u_k \leq C.
\]
\item[(ii)] In particular, $u: X \to \R$ satisfying $\sup_X u < \infty$ which is a subsolution, i.e.,
\[
 \begin{cases}
  u(x) \leq \mu \sup\limits_{X} u + (1-\mu) \inf\limits_{B(x)} u + \Lambda \quad &\mbox{if $x \in Y$},\\
 u(x) \leq \Lambda \quad &\mbox{if $x \in X \backslash Y$},
  \end{cases}
\]
actually satisfies
\[
 \sup_X u \leq C,
\]
\item[(iii)] and any $u: X \to \R$ satisfying $\inf_X u > -\infty$ which is a supersolution, i.e.,
\[
 \begin{cases}
  u(x) \geq \mu \sup\limits_{B(x)} u + (1-\mu) \inf\limits_{X} u - \Lambda \quad &\mbox{if $x \in Y$},\\
 u(x) \geq - \Lambda \quad &\mbox{if $x \in X \backslash Y$},
  \end{cases}
\]
actually satisfies
\[
 \inf_X u \geq -C.
\]
\end{itemize}
\end{theorem}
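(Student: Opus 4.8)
The plan is to reduce the general statement to the system-of-sequences estimate Proposition~\ref{pr:akalphaest}. First I would observe that (iii) follows from (ii) by replacing $u$ with $-u$ (and $B(x)$ stays the same for the $\inf$/$\sup$ swap), and (ii) follows from (i) by taking the constant sequence $u_k \equiv u$; so only (i) needs a proof. Set $M_k := \sup_X u_k$, which is finite for $k=0$ by hypothesis, and note from \eqref{eq:ukp1leq} that $M_{k+1} \leq \mu M_k + (1-\mu)\sup_{x\in Y}\inf_{B(x)} u_k + \Lambda$, and also $M_{k+1}\leq \max\{\mu M_k + (1-\mu)\inf_{B(x)}u_k + \Lambda : x\in Y\}$ together with the boundary bound $\Lambda$; in particular $M_k < \infty$ for all $k$ by induction, so all quantities below make sense.

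The key idea is to track, for each ``depth'' $\alpha$, the supremum of $u_k$ over points that can reach the boundary $X\setminus Y$ in at most $\alpha$ chain-steps. Concretely, let $d=\diam X$, and for $\alpha = 0,1,\dots,d$ define
\[
 Z_\alpha := \{x \in X : x \text{ has a chain } (x_i)_{i=0}^{j} \text{ with } j \leq \alpha,\ x_0 = x,\ x_j \in X\setminus Y,\ x_i \in B(x_{i-1})\},
\]
with the convention that $X\setminus Y \subset Z_0$. By admissibility $Z_d = X$, and $Z_0 \subset Z_1 \subset \cdots \subset Z_d$. Put $a_\alpha^k := \sup_{Z_\alpha} u_k$ (finite, since $a_\alpha^k \leq M_k < \infty$). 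The point is: if $x \in Z_\alpha \cap Y$ with $\alpha \geq 1$, then by definition there is a point $y \in B(x) \cap Z_{\alpha-1}$, hence $\inf_{B(x)} u_k \leq u_k(y) \leq a_{\alpha-1}^k$; combined with $\sup_X u_k = a_d^k$ and \eqref{eq:ukp1leq} this gives $u_{k+1}(x) \leq \mu a_d^k + (1-\mu) a_{\alpha-1}^k + \Lambda$. For $x \in Z_\alpha \setminus Y$ we have $u_{k+1}(x) \leq \Lambda$. Taking the supremum over $x\in Z_\alpha$ yields exactly
\[
 a_\alpha^{k+1} \leq \begin{cases} \Lambda & \alpha = 0,\\ \mu a_d^k + (1-\mu) a_{\alpha-1}^k + \Lambda & \alpha = 1,\dots,d,\end{cases}
\]
which is \eqref{eq:akp1iteration}. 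One small caveat: Proposition~\ref{pr:akalphaest} requires a nonnegative sequence, so I would first apply the argument to $\tilde u_k := \max\{u_k, 0\}$ (or equivalently note the recursion is monotone and add a fixed large constant to $\Lambda$ to absorb any negative part) — replacing $\Lambda$ by $\Lambda' = \Lambda + \mu\,(\sup_X u_0)_+ \cdot(\dots)$ is not quite it; cleaner is to observe that $\sup_X u_k$ being bounded above is unaffected by truncating below, and $\max\{u_k,0\}$ still satisfies \eqref{eq:ukp1leq} with the same $\Lambda$ since $\mu\sup u_k^+ + (1-\mu)\inf_{B}u_k^+ \geq \mu\sup u_k + (1-\mu)\inf_B u_k$ when the right side is what bounds $u_{k+1}(x)$, after also truncating $u_{k+1}$. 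Either way, Proposition~\ref{pr:akalphaest} then gives $\max_\alpha \limsup_{k\to\infty} a_\alpha^k \leq C(d,\Lambda,\mu)$, and since $\sup_X u_k = a_d^k$ we conclude $\limsup_{k\to\infty}\sup_X u_k \leq C$, proving (i).

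The main obstacle I anticipate is not the iteration estimate itself (that is handed to us by Proposition~\ref{pr:akalphaest}) but getting the bookkeeping of the sets $Z_\alpha$ exactly right so that the recursion closes: one must be careful that the $\inf$ over $B(x)$ genuinely sees a point one ``level closer'' to the boundary, and that the top level $Z_d = X$ — which is where the $\mu\sup_X$ term feeds back in — is correctly identified with the full space via the finite-diameter hypothesis. A secondary technical point is the nonnegativity normalization needed to legitimately invoke Proposition~\ref{pr:akalphaest}, which should be dispatched by a truncation-from-below remark as above. Everything else is a direct substitution into the already-proven system estimate.
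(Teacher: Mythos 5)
Your proposal is correct and essentially reproduces the paper's proof: partition $X$ into the layers $Z_\alpha$ (the paper's $X_\alpha$) of points reaching $X\setminus Y$ in at most $\alpha$ steps, set $a_\alpha^k := \sup_{Z_\alpha}u_k$ after truncating $u_k$ from below at $0$ so that Proposition~\ref{pr:akalphaest} applies, verify the system recursion \eqref{eq:akp1iteration} (using that $\inf_{B(x)}u_k$ sees a point in $Z_{\alpha-1}$ and that $\sup_X u_k = a_d^k$), and invoke the proposition. The one imprecision is your reduction of (iii) to (ii): replacing $u$ by $-u$ interchanges not only $\sup$ and $\inf$ but also the roles of $\mu$ and $1-\mu$, so you must additionally swap $\mu\leftrightarrow 1-\mu$ (harmless, e.g.\ take $C := \max\{C(\mu),C(1-\mu)\}$), a point the paper states explicitly.
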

\begin{remark}
It is obvious that for $\mu = 0$ the claims $(i)$ and $(ii)$ still hold. The claim $(iii)$ also holds by switching $\mu$ and $(1-\mu)$.
\end{remark}

\begin{proof}
Assuming (i), the claim of (ii) follows by setting $u_k = u$. The claim (iii) follows from (ii) by replacing $u$ by $-u$, and swapping $\mu$ and $(1-\mu)$.

It remains to show (i). We can assume that w.l.o.g. $u_k \geq 0$; If not, we just replace $u_k$ by $(u_{k})_+ = \max \{u_k,0\}$.

Set $d := \diam X$. We slice our $X$ into subsets $X_\alpha$, for $\alpha \in \{0,1,\ldots,d\}$, which contain all the points which need at most $\alpha$ steps to connect to the boundary via the balls $B(x)$. More precisely, $X_0 := X \backslash Y$, $X_d = X$ and for $\alpha \geq 1$,
\[
 X_\alpha := \left \{ 
 \begin{array}{ll}
 x \in Y\quad :&\mbox{for $\tilde{\alpha} \leq \alpha$ there are $x_0, x_1, \ldots, x_{\tilde{\alpha}} \in X$, such that}\\
	& x_0 = x, \quad x_{\tilde{\alpha}} \in X \backslash Y,\\
	&\mbox{and $x_i \in B(x_{i-1})$ for $1 \leq i \leq \tilde{\alpha}$}
 \end{array}
 \right \}.
\]
Denoting
\[
 a^k_\alpha := \sup_{X_\alpha} u^k \in [0,\infty)
\]
we then obtain the following from \eqref{eq:ukp1leq} 
\[
 a^{k+1}_\alpha \leq \begin{cases}
                 \Lambda \quad & \mbox{if }\alpha = 0,\\
                 \mu a_d^k + (1-\mu) a_{\alpha - 1}^k + \Lambda \quad & \mbox{if }\alpha = 1,\ldots,d.
                 \end{cases}
\]
From the game's point of view, this is essentially assuming that the player who tries to minimize the value function employs the possibly suboptimal strategy of always moving towards the boundary $X \backslash Y$.

From Proposition~\ref{pr:akalphaest}, we have for a constant $C > 0$ depending on $\Lambda$, and diameter $d =\diam X$ and $\mu$, such that
\[
 \limsup_{k \to \infty} \sup_X u^k \leq \max_{\alpha = 0,\ldots,d} \limsup_{k \to \infty} a^{k}_\alpha < C.
\]
\end{proof}

\section{Uniform Convergence and Trees: Proof of Theorem~\ref{th:uniformconvergence}}
The main step in proving Theorem~\ref{th:uniformconvergence} is the following Lemma, which compares $u_k$ to a function $v_k$ which does not depend on $u_0$.
\begin{lemma}\label{la:convergencegen}
Given $\mu \in (0,1)$, $f: Y \to \R$, $F: X \backslash Y \to \R$ with
\[
 \sup_X |F| + \sup_X |f| < \infty,
\]
and
\[
 \inf_Y f > 0.
\]
For any $i \in \N$ there is a function $v_i : X \to R$ such that the following holds:

Assume that $u_k: X \to \R$ for $k \in \N_0$ satisfies
\begin{equation}\label{eq:iterationgen}
 u_{k+1}(x) = \begin{cases}
                f(x) + \mu \sup\limits_{B(x)} u_k + (1-\mu) \inf\limits_{B(x)} u_k \quad &\mbox{if $x \in Y$},\\
		F(x) \quad &\mbox{if $x \in X \backslash Y$},
               \end{cases}
\end{equation}
and
\[
 \sup_X u_0 < \infty.
\]
Then for any $\delta > 0$ there exists $L \in \N$ such that
\[
 \sup_{k,i \in \N_0} \sup_{x \in X} |u_{L+k}(x) - v_{L+i}(x)| \leq \delta.
\]
\end{lemma}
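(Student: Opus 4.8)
Looking at this problem, I need to prove Lemma~\ref{la:convergencegen}, which says that the iterates $u_{L+k}$ all stay $\delta$-close to some $u_0$-independent functions $v_{L+i}$, once $L$ is large enough.

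\textbf{The plan.} The key idea is to compare the iteration started from $u_0$ with the iteration started from a \emph{different} initial datum in such a way that the comparison becomes $u_0$-independent. Concretely, since $\inf_Y f > 0$, after one step every iterate satisfies $u_1(x) \geq f(x) > 0$ on $Y$ (if we first normalize). Actually the cleaner route: let $v_i$ be the iteration \eqref{eq:iterationgen} started from $v_0 \equiv -M$ for a large constant $M$, so that $v_0 \leq u_0$ pointwise and $v_0$ is a subsolution (monotone increasing iterates). Then by monotonicity of the iteration operator, $v_k \leq u_k$ for all $k$. The boundedness Theorem~\ref{th:boundednessGen} (applied to both $u_k$ and $v_k$, and also to $-u_k$, $-v_k$ via the supersolution bound) gives that $\sup_X u_k$ and $\inf_X u_k$ are eventually bounded by a universal constant $C$; the same holds for $v_k$. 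So for $L$ large, $\sup_X |u_L - v_L| \leq 2C$, a bound uniform over $L$ but \emph{not} yet small.

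\textbf{Making the gap small.} To get $\delta$-smallness I would track the difference $w_k := u_k - v_k \geq 0$. Unrolling the iteration using a game-tree: the value $w_{L+n}(x)$ is a weighted average (with weights $\mu^{l(t)}(1-\mu)^{r(t)}$ over leaves $t$ of some strictly binary tree $T$ of depth $\leq n$ encoding the optimal strategies) of $w_L$-values at ``boundary-or-depth-$n$'' nodes, plus running-cost contributions that \emph{cancel} since $f$ and $F$ are the same for both iterations. Crucially, the positivity $\inf_Y f =: c_0 > 0$ forces the optimal game to terminate: each interior node of the tree contributes a guaranteed cost $\geq c_0 \mu^{l(t)}(1-\mu)^{r(t)}$ toward the bounded total $\sup_X u_{L+n} + \sup_X|v_{L+n}| \leq 2C$. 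This is exactly the hypothesis \eqref{eq:trees:sumest} of Proposition~\ref{pr:depthest} (with $C$ replaced by $2C/c_0$), whose conclusion \eqref{eq:tree:largeissmall} says the total weight of the maximal-depth leaves is $\leq \delta' := \delta/(4C)$ once $n \geq K(\mu, 2C/c_0, \delta')$. Then
\[
 w_{L+n}(x) \leq \sum_{t \in \mathrm{leaf} T,\ d(t) < n} \mu^{l(t)}(1-\mu)^{r(t)}\, w_{L+n-d(t)}(\text{boundary pt}) + \sum_{d(t) = n} \mu^{l(t)}(1-\mu)^{r(t)}\, w_L(x_t).
\]
The first sum vanishes because at boundary points $u = v = F$, so $w = 0$ there; the second sum is $\leq \delta' \cdot 2C \leq \delta/2$. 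Running the same argument with $u$ and $v$ interchanged (or using $|w|$) gives $|u_{L+n} - v_{L+n}| \leq \delta$ for all $n \geq $ a fixed $N$. Finally, rename: set $L' := L + N$ and note that for the statement as written we want $\sup_{k,i}\sup_x |u_{L+k} - v_{L+i}| \leq \delta$; this follows by applying the above to compare $u_{L+k}$ with $v_{L+k}$ and also $v_{L+i}$ with $v_{L+k}$ (both are iterations from bounded data, the latter with $u_0$ replaced by $v_i$-type data), absorbing constants into the choice of $L$ and replacing $\delta$ by $\delta/2$ throughout.

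\textbf{Main obstacle.} The delicate point is making rigorous the game-tree representation of $w_{L+n}$ as a weighted average over leaves with the stated weights, and verifying that the running-cost accumulation along interior nodes is genuinely bounded by $\sup u + \sup|v|$ so that Proposition~\ref{pr:depthest} applies — i.e., correctly setting up the tree $(x_t)_{t \in T}$ that realizes (up to arbitrarily small error, since sup/inf need not be attained) the optimal strategies of the two players, and handling the $\eps$-suboptimality in choosing near-maximizers and near-minimizers in $B(x)$ without the errors compounding uncontrollably over the (a priori unbounded) tree. One manages this by choosing the suboptimality at node $t$ to be, say, $2^{-d(t)}\eta$ so the total error is $\leq \eta$ and can be absorbed. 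Getting the bookkeeping of these nested approximations clean, and confirming that the tree one builds is indeed strictly binary and finite (which is where $\inf_Y f > 0$ together with boundedness is used — the game cannot go on forever without the cost exceeding the known bound), is the crux of the argument.
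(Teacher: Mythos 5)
Your proposal correctly identifies the two pillars of the paper's argument — the game-tree unrolling of the iteration (equations~\eqref{eq:semigroup}--\eqref{eq:semigroupC}) and the sparsity estimate Proposition~\ref{pr:depthest} — and you are right that positivity of $f$ plus boundedness forces the trees to have low weight at maximal depth. But the central step is left as a hand-wave, and it is precisely where the difficulty lies.

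The gap is the claim that ``$w_{L+n}(x)$ is a weighted average (over leaves of \emph{some} strictly binary tree $T$ encoding the optimal strategies) of $w_L$-values \dots plus running-cost contributions that cancel.'' There is no single tree here: the near-optimal $\infsup$ trees for $u_{L+n}$ and for $v_{L+n}$ are in general \emph{different}, because the two players respond to different data, and $w_{k+1}=u_{k+1}-v_{k+1}$ does \emph{not} itself satisfy a DPP of the same form (the $\sup$/$\inf$ operators are not linear, so $f$ and $F$ do not ``cancel'' inside a single tree representation of $w$). Your monotonicity device $v_k\le u_k$ only yields $\sup_X w_{k+1}\le\sup_X w_k$, which is not enough. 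The paper closes exactly this hole by a two-part observation: (a) \emph{any} near-optimal strategy for \emph{any} initial datum bounded by $\Lambda$ yields a tree satisfying \eqref{eq:titer}, so both $u_{k+L}$ and $v_L$ can be written as $\infsup$ over the \emph{same} restricted strategy class $\tilde A_{L,C}(x)$; and (b) the nested $\infsup$ is a $1$-Lipschitz functional of the terminal data, so $|u_{k+L}(x)-v_L(x)|\le\sup_{\tilde A_{L,C}(x)}|w(x,T,(x_t),u_k)-w(x,T,(x_t),0)|$, and for each \emph{fixed} $(T,(x_t))$ the running costs do cancel, leaving only the (sparse) maximal-depth leaf contribution. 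Without (a)+(b) or an equivalent device you cannot localize the difference to the leaves.

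Two smaller issues: first, you take $v_0\equiv-M$ with $M$ chosen so that $v_0\le u_0$, but then $M$ (hence the $v_i$) depends on $u_0$, contradicting the required $u_0$-independence of the $v_i$; the paper simply takes $v_0\equiv 0$ and does not use monotonicity at all. Second, the tree has depth \emph{exactly} at most $L$, a number you fix in advance, so the suboptimality errors are summed over a finite tree and your $2^{-d(t)}\eta$ scheme is unnecessary — the paper works with $1$-near-optimal strategies (condition~\eqref{eq:choiceofTclose}) with no compounding-error concerns; the role of $\inf_Y f>0$ is not to make the tree finite but to force the inequality~\eqref{eq:titer} which feeds Proposition~\ref{pr:depthest}.
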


This Lemma implies Theorem~\ref{th:uniformconvergence}: 
\begin{proof}[Proof of Theorem~\ref{th:uniformconvergence}]
First, start the iteration \eqref{eq:iterationgen} with $u_0 := \inf_X F$. Note that we have pointwise monotonicity $u_{k+1}(x) \geq u_k(x)$ for all $k \in \N$, but
\[
\sup_{k \in \N} \sup_X |u_k| < \infty,
\]
by Theorem~\ref{th:boundednessGen}.

So there exists a pointwise limit $u(x) := \sup_k u_k(x): X \to \R$. Note that for all we know so far, $u$ might only be a subsolution. Lemma~\ref{la:convergencegen} tells us, that $\lim_{k \to \infty} u = u$ as a uniform limit: Indeed, fix $\delta > 0$, and let $L$ be from Lemma~\ref{la:convergencegen} so that
\[
 \sup_{k,i \in \N_0} \sup_{x \in X} |u_{L+k}(x) - v_{L+i}(x)| \leq \frac{\delta}{2}.
\]
Fix now $x \in X$, then there exists $K=K(x) \in \N$ such that for any $k \geq 0$,
\[
 |u_{K+k}(x) - u(x)| \leq \frac{\delta}{2}.
\]
In particular, 
\[
 |u(x) - v_{L+i}(x)| \leq |u_{K+L}(x) - u(x)| + |u_{L+K}(x) - v_{L+i}(x)| \leq \delta,
\]
which since it holds for all $x \in X$ implies
\[
 \sup_{i \in \N_0}\sup_X |u(x) - v_{L+i}(x)| \leq \delta.
\]
Especially, for any $\delta > 0$ there is $L \in \N$ such that
\[
 \sup_{k \geq L} \sup_X |u(x) - u_k(x)| \leq  \sup_{k \geq L} \sup_X  |u(x) - v_L(x)| +  \sup_{k \geq L} \sup_X  |v_L(x) - u_k(x)| \leq 2\delta,
\]
and we have uniform convergence.

Let now $\tilde{u}_k$ be any other iteration starting from another $\tilde{u}_0$ with $\sup_X u_0 < \infty$. Again by Lemma~\ref{la:convergencegen}, for the same $v_i$'s there exists another constant $\tilde{L} = \tilde{L}(\delta,\tilde{u}_0)\in \N$ such that
\[
 \sup_X \sup_{k \geq \tilde{L}} |u(x) - \tilde{u}_k(x)| \leq \sup_X |u(x) - v_{\tilde{L}+L}(x)|  + \sup_X \sup_{k \geq \tilde{L}} |\tilde{u}_k(x) - v_{\tilde{L}+L}(x)| \leq 2 \delta.
\]
Thus, starting the iteration from any $\tilde{u}_0$ we have uniform convergence to $u$, and Theorem~\ref{th:uniformconvergence} is proven.
\end{proof}

It remains to prove Lemma~\ref{la:convergencegen}.
\begin{proof}[Proof of Lemma~\ref{la:convergencegen}]
Recall from Section~\ref{ss:trees} the definition of strictly binary trees $T \in \T_2$ , and sequences $(x_t)_{t \in T}$, cf. also Figure~\ref{fig:trees}.
\begin{figure}
\centering
\includegraphics[width=130mm]{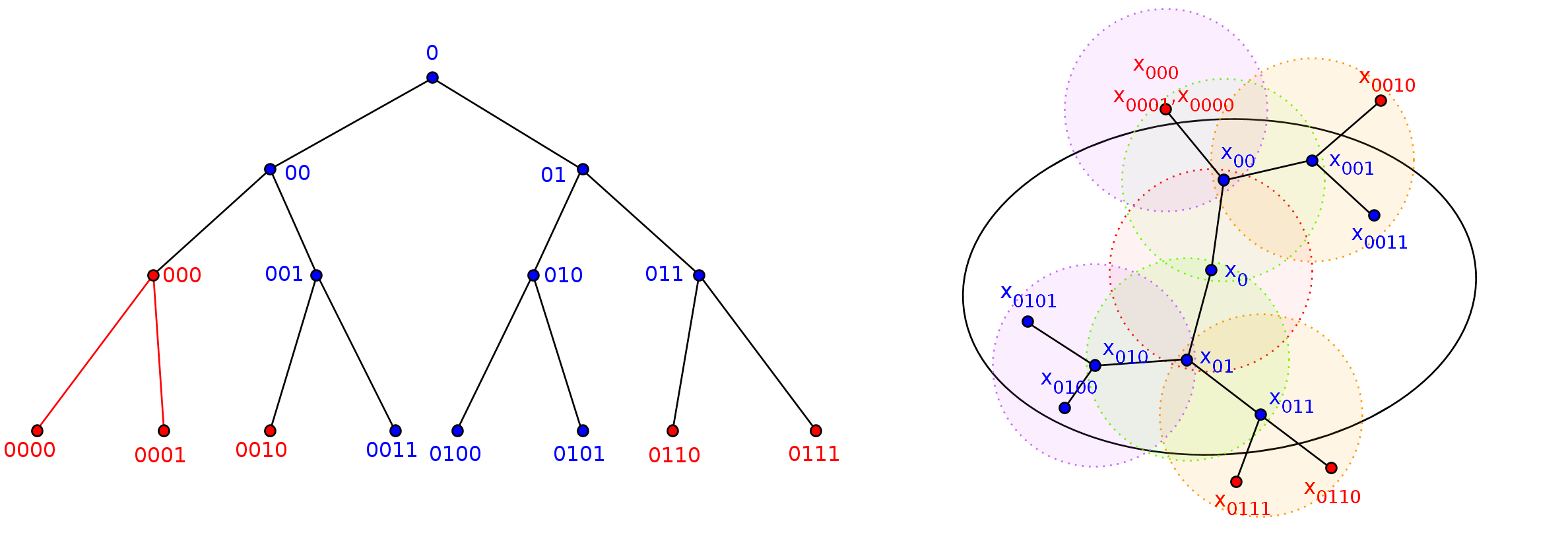}
\caption{A possible run for the depth $L = 3$}
\label{fig:treerun2}
\end{figure}
Using Theorem~\ref{th:boundednessGen}, we can fix $\Lambda > 0$, such that 
\[
  \sup_{X} |F| + \sup_X |f| + \sup_X \sup_k |u_k| \leq \Lambda,
\]
and $\lambda > 0$ such that
\[
 \inf_Y f > \lambda.
\]
For formal reasons it makes sense to set $B(x) = \{x\}$ if $x \in X \backslash Y$ and $f \equiv 0$ on $X \backslash Y$.

Before going into the details, let us describe the general idea. 
It is natural, to compute for some $x$ the subsequent choices which attain $\sup_{B(x)} u$ or $\inf_{B(x)} u$, and try to find estimates on the resulting paths. This is a very natural idea, and has been used for uniqueness arguments, cf. \cite{ArmstrongSmart1} with a probabilistic argument. 
Here, our setup is fully deterministic, and thus we follow both supremum and infimum-paths at the same time, and store this information in trees. The main observation is that these trees have a very special structure. Philosophically, in terms of stochastic games, this is related to the fact that the expectation of the stochastic game terminating is one. 

Let us also remark the following aspect: Our deterministic argument needs to store information in trees, i.e. it needs exponential space in terms of the steps of the game, whereas the probabilistic argument stores its information in structures of polynomial size. Philosophically, one might want to compare this to classical Information Theory, and in particular to the case of non-deterministic and deterministic Turing machines: Every non-deterministic Turing machine can be described by a deterministic Turing machine of exponential size.

\subsection*{Outline of the proof} For fixed $k \in \N$, $L \in \N$, $x = x_0 \in Y$, given $u_k$, we can compute $u_{L+k}$ from the point of view of a game. We introduce Player I, the player that tries to maximize $u_{L+k}$, and Player II, the player that tries to minimize $u_{L+k}$. Starting from $x_0$, Player I chooses his favorite point $x_{00}$, such that \[u_{L-1+k}(x_{00}) \approx \sup_{B(x_0)} u_{L-1+k},\] and Player II picks his point $x_{01} \in B(x_0)$ such that \[u_{L-1+k}(x_{01}) \approx \inf_{B(x_0)} u_{L-1+k}.\] That is to say,
\[
 u_{L+k}(x_0) \approx \mu u_{L-1+k}(x_{00}) + (1-\mu) u_{L-1+k}(x_{01}) + f(x_0).
\]
If $L \geq 1$, we have to go on: If $x_{00}$ lies in the ``boundary`` $X \backslash Y$, the game stops in this branch. If $x_{00}$ is in the ``interior'' $Y$, then again both Players choose their favorite point $x_{000}$ and $x_{001}$ such that
\[
 u_{L-2+k}(x_{000}) \approx \sup_{B(x_{00})} u_{L-2+k}, \quad \mbox{and}\quad u_{L-2+k}(x_{001}) \approx \inf_{B(x_{00})} u_{L-2+k}.
\]
The same we do for $x_{01}$ -- if it is in the boundary $X \backslash Y$, we stop, if it is in the interior $Y$, we pick $x_{010}$ and $x_{011}$. 

Let us look at some examples: in the case where $x_{00}$ and $x_{01}$ are both in the boundary $X \backslash Y$, we have
\[
 u_{L+k}(x_0) \approx \mu F(x_{00}) + (1-\mu) F(x_{01}) + f(x_0),
\]
in the case where $x_{00}$ and $x_{01}$ are both in the interior $Y$, we have
\begin{align*}
 u_{L+k}(x_0) &\approx \mu \brac{\mu u_{L-2+k}(x_{000})+ (1-\mu) u_{L-2+k}(x_{001}) + f(x_{00})}\\
 &\quad + (1-\mu) \brac{\mu u_{L-2+k}(x_{010})+ (1-\mu) u_{L-2+k}(x_{011}) + f(x_{01})}\\
 &\quad + f(x_0)\\
  &\approx \mu^2 u_{L-2+k}(x_{000})\\ 
  & \quad + \mu (1-\mu) u_{L-2+k}(x_{001}) + (1-\mu)\mu u_{L-2+k}(x_{010})\\
  & \quad + (1-\mu)^2 u_{L-2+k}(x_{011}) \\
  & \quad + \mu f(x_{00}) + (1-\mu) f(x_{01}) + f(x_0),
\end{align*}
and in the case where $x_{00}$ is in the interior $Y$ and $x_{01}$ is in the boundary $X \backslash Y$,
\begin{align*}
 u_{L+k}(x_0) &\approx \mu \brac{\mu u_{L-2+k}(x_{000})+ (1-\mu) u_{L-2+k}(x_{001}) + f(x_{00})}\\
 &\quad + (1-\mu) F(x_{01})\\
 &\quad + f(x_0)\\
  &\approx \mu^2 u_{L-2+k}(x_{000})\\ 
  & \quad + \mu (1-\mu) u_{L-2+k}(x_{001}) + (1-\mu) F(x_{01}) \\
  & \quad + \mu f(x_{00}) + f(x_0),
\end{align*}

We iterate this argument $L$ times. We obtain a formula computing $u_{k+L}(x)$ from $u_k$, a tree $T^\ast \in \T_2$ of depth at most $L$, and a sequence indexed by this tree $(x_t)_{t \in T^\ast}$, and we obtain an expression
\[
 u_{L+k}(x) \approx w(x,T^\ast,(x_t)_{t \in T^\ast},u_k).
\]
Our main observation is the following. All these ``optimal'' trees $T^\ast$ have a specific structure: They satisfy the estimate \eqref{eq:titer}, and hence the assumptions of Proposition~\ref{pr:depthest}. On may see this as a kind of comparison principle for game-trees, although we shall not pursue this notion further. Proposition~\ref{pr:depthest} implies that there are actually relatively few leafs of maximal depth $L$ in $T^\ast$. But whenever a game progression $(x_t)_{t \in T^\ast}$ does not end with a leaf of maximal depth, this means that in this branch $x_t$ hits the boundary $X \backslash Y$, where the value of $u_k$ is given by $F$. In other words, in the formula expressing $u_{L+k}(x)$ in terms of $u_k$, most of the terms actually are depending only on the boundary values $F$ and the running costs $f$, and not on $u_k$, i.e., we have
\[
 u_{k+L}(x) \approx w(x,T^\ast,(x_t)_{t \in T^\ast},u_k) =  V(F,f,x) + \mathcal{E}(u_k,x),
\]
where $\mathcal{E}(u_k,x)$ is small. That amounts to saying that $|u_{k+L}(x) - V(F,f,x,L)|$ is small, as desired.

\subsection*{Rigorous Argument}
Given a point $x \in X$, $L \in \N$, we call $B_L(x)$ the \emph{long admissible strategies} of at most $L$ steps: Let $T_L \in \T_2$ be the full tree of depth $L$.
\[
 B_L(x) := \left \{
\begin{array}{lcl}
(x_t)_{t \in T_L}:\ 
&\mbox{(i)}\ &x_0 = x,\\
&\mbox{(ii)}\ &x_t \in B(x_{\operatorname{par}(t)}) \quad \mbox{if $t \neq 0$}
\end{array}
\right \}.
 \]
We call $A_L(x)$ the \emph{short admissible strategies} of at most $L$ steps: 
\[
A_L(x) := \left \{
\begin{array}{lcl}
(T ,(x_t)_{t \in T}):\ 
&\mbox{(i)}\ &T \in \T_2,\ d(T) \leq L,\  x_0 = x,\\
&\mbox{(ii)}\ &\mbox{$t \not \in {\rm leaf}(T)$ $\Rightarrow$ $x_t \not \in X \backslash Y$},\\
&\qquad &\mbox{\ and $x_{t0},x_{t1} \in B(x_t)$},\\
&\mbox{(iii)}\ &\mbox{$t \in {\rm leaf}(T)$ and $d(t) < L$}\\
&\qquad &\mbox{$\Rightarrow$ $x_t \in X\backslash Y$}
\end{array}
\right \}.
\]
Condition (ii) describes that when some $x_t$ has children $x_{t0}$, $x_{t1}$, these have to be in $B(x_t)$, and $x_t$ itself cannot be in the ``boundary'' $X \backslash Y$. The latter means that the game stops if one of the players reaches the boundary.

Condition (iii) tells us that the only way to end a game in less than $L$ steps, is for one of the players to move his point $x_t$ to the boundary $X \backslash Y$.

We write
\[
 A(x) = \bigcup_{L \in \N} A_L(x).
\]
Every long strategy $(x_t)_{t \in T_L} \in B_L(x)$ can be reduced to a unique short strategy $(T^\ast,(x_t)_{t \in T^\ast}) \in A_L(x)$, a process which we depicted in Figure~\ref{fig:treerun2} and Figure~\ref{fig:cuttreerun}: Recall that $B(x) = \{x\}$ whenever $x \in X \backslash Y$. Given $(x_t)_{t \in T_L} \in B_L(x)$, whenever there is $x_t \in X \backslash Y$, then for all successors $\tilde{t}$ of $t$, we have $x_{\tilde{t}} = x_t$. So starting from $T_L$, we erase all successors for all nodes $t$, where $x_t \in X \backslash Y$. The resulting tree, we call $T^\ast$, and the resulting sequence $(x_t)_{t \in T^\ast}$. This reduction is reversible, and any $(T^\ast,(x_t)_{t \in T^\ast}) \in A_L(x)$ can be associated to exactly one $(x_t) \in B_L(x)$.

\begin{figure}
\centering
\includegraphics[width=130mm]{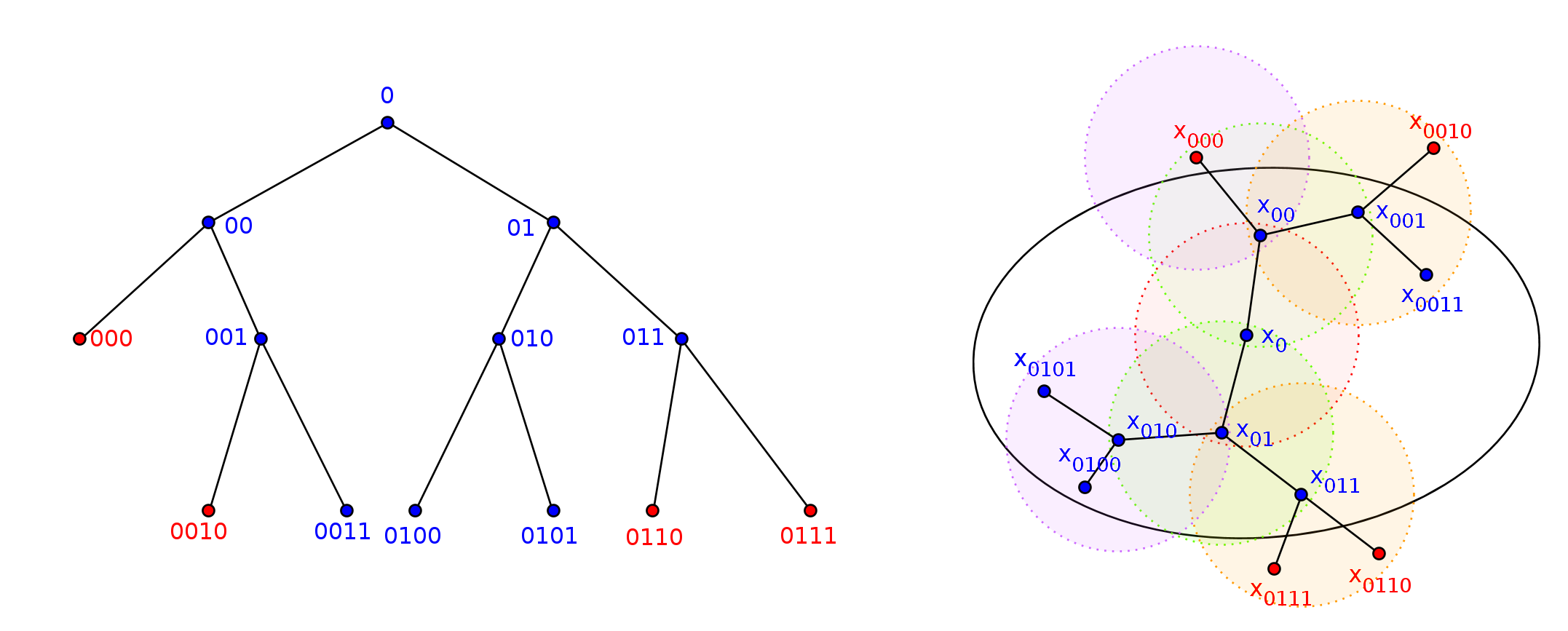}
\caption{The reduction of the tree in Figure~\ref{fig:treerun2} to $T^\ast$, $(x_t)_{t \in T^\ast}$ in $A_3$, where $x_{0000}$, $x_{0001}$ are cut away}
\label{fig:cuttreerun}
\end{figure}

For any $x \in Y$, any tree $T \in \T_2$ and sequence $(x_t)_{t \in T} \in A(x)$ and for any mapping $v : X \to \R$ we set
\begin{align*}
w(x,T,(x_t)_{t \in T},v) := &f(x) + \sum_{t \not \in {\rm leaf}T} \mu^{l(t)} (1-\mu)^{r(t)}\ f(x_t)\\
&+ \sum_{t \in {\rm leaf}T, x_t \in X \backslash Y} \mu^{l(t)} (1-\mu)^{r(t)}\ F(x_t) \\
&+ \sum_{t \in {\rm leaf}T, x_t \in Y} \mu^{l(t)} (1-\mu)^{r(t)}\ v(x_t).
\end{align*}
If $x \in X\backslash Y$ we set $w(x,T,(x_t)_{t \in T},v) := F(x)$.
Also note
\begin{align*}
w(x,T,(x_t)_{t \in T},v) \geq &\lambda +  \lambda\ \sum_{t \not \in {\rm leaf}T} \mu^{l(t)} (1-\mu)^{r(t)}\\
&- \sum_{t \in {\rm leaf}T} \mu^{l(t)} (1-\mu)^{r(t)}\ (\Lambda + \sup_X |v|).
\end{align*}

The operator $w$ should be seen as the $L$-th iteration of \eqref{eq:iterationgen} starting from $v$. We are going to write this as follows, where we recall the construction of $T^\ast$ from above:
\begin{equation}\label{eq:semigroup}
 u_{k+L}(x) = \infsup_{(x_t)_{t \in T_L} \in B_L(x)} w(x,T^\ast,(x_t)_{t \in T^\ast},u_k).
\end{equation}
In order to describe what the right-hand side means, recall the total ordering of the full tree $T_L \in \T_2$, starting from the root $t_0$, and then moving in every layer from left to right. Ordering the tree like this, let us call the $k$th vertex to be $t_k$, $0 \leq k \leq \mathcal{L} := 2^{L+1} - 2$. Then
\[
 \infsup_{(x_t)_{t \in T_L} \in B_L(x)} w(x,T^\ast,(x_t)_{t \in T^\ast},v) =
\]
\[
  \stinfsup^{[t_{\mathcal{L}}]}_{x_{t_{\mathcal{L}}} \in B(x_{\operatorname{par}(t_{\mathcal{L}})})}\ 
 \stinfsup^{[t_{\mathcal{L}-1}]}_{x_{t_{\mathcal{L}-1}} \in B(x_{\operatorname{par}(t_{\mathcal{L}-1})})}\ 
 \ldots 
 \stinfsup^{[t_{2}]}_{x_{t_{2}} \in B(x_{t_{0}})}\ \stinfsup^{[t_{1}]}_{x_{t_1} \in B(x_{t_{0}})} w(x,T^\ast,(x_t)_{t \in T^\ast},v),
\]
where $\stinfsup$ has to be replaced by $\sup$ or $\inf$ according to whether the tree vertex $t_k$ is a left child or a right child of some vertex $\tilde{t}$.
\[
\stinfsup^{[t_{k}]}_{x_{t_{k}} \in B(x_{\operatorname{par}(t_{k})})} = \begin{cases}
                                   \sup\limits_{x_{t_{k}} \in B(x_{\operatorname{par}(t_{k})})} \quad &\mbox{if $t_k = \tilde{t}0$},\\
                                   \inf\limits_{x_{t_{k}} \in B(x_{\operatorname{par}(t_{k})})} \quad &\mbox{if $t_k = \tilde{t}1$}.
                                  \end{cases}
\]
Having defined the right-hand side of \eqref{eq:semigroup}, let us prove it:

It is certainly true for $L = 1$, since by the iteration \eqref{eq:iterationgen}, for $x = x_0 \in Y$,
\begin{align*}
 u_{k+1}(x) &= f(x) + \mu \sup_{x_{00} \in B(x_0)} u_k(x_{00}) + (1-\mu) \inf_{x_{01} B(x)} u_k(x_{01}) \\
	    &= \sup_{x_{00} \in B(x_0)} \inf_{x_{01} \in B(x_0)}  \brac{f(x) + \mu u_k(x_{00}) + (1-\mu) u_k(x_{01})} \\
	    &= \sup_{x_{00} \in B(x_0)} \inf_{x_{01} \in B(x_0)}  w(x,T,(x_t)_{t \in T},u_k).
\end{align*}
for $T = T^\ast = \T_2$, which is the only possible tree in $A_2(x)$, if $x \in Y$.

Now assume \eqref{eq:semigroup} holds for step-sizes of $L-1$, then
\[
 u_{k+L}(x) = \infsup_{(x_t)_{t \in T_{L-1}} \in B_{L-1}(x)} w(x,T^\ast,(x_t)_{t \in T^\ast},u_{k+1}). 
\]
Considering the definition of $w$, we need only to consider $u_{k+1}(x_t)$ for $t$ such that $d(t) = T$, and $x_t \in Y$. For these we use the iteration
\[
 u_{k+1}(x_t) = \sup_{x_{t0} \in B(x_t)} \inf_{x_{t1} \in B(x_t)} \brac{f(x_t) + \mu u_k(x_{t0}) + (1-\mu)u_k(x_{t1})},
\]
and obtain a new, extended tree $\tilde{T}^\ast$ of length at most $L$, and a extended sequence $(x_t)_{t \in \tilde{T}^\ast}$ $\in A_L(x)$, and the resulting formula proves \eqref{eq:semigroup} for $L$.

For any choice of $(T^\ast,(x_t)_{t \in T^\ast}) \in A_L(x)$, such that
\begin{equation}\label{eq:choiceofTclose}
 |w(x,T^\ast,(x_t)_{t \in T^\ast},u_k) - \infsup_{(x_t) \in B_L(x)} w(x,T^\ast,(x_t)_{t \in T^\ast},u_k)| \leq 1,
\end{equation}
we then have that
\begin{align*}
 \Lambda +1 \geq u_{k+L}(x)+1 &\geq \lambda + \sum_{l = 1}^{d(T)} \sum_{t \not \in {\rm leaf}T,\ d(t) = l} \mu^{l(t)} (1-\mu)^{r(t)}\ \lambda\\
&- \sum_{t \in {\rm leaf}T} \mu^{l(t)} (1-\mu)^{r(t)}\ (2\Lambda) ,
\end{align*}
that is for any $(T^\ast,(x_t)_{t \in T^\ast}) \in A_L(x)$ such that \eqref{eq:choiceofTclose} holds, we have
\begin{equation}\label{eq:titer}
\sum_{t \not \in {\rm leaf}T^\ast} \mu^{l(t)} (1-\mu)^{r(t)}\leq C+ C\ \sum_{t \in {\rm leaf}T^\ast} \mu^{l(t)} (1-\mu)^{r(t)},
\end{equation}
for some uniform $C = C(\Lambda,\lambda)$.

That is, if we set the \emph{short, good, admissible strategies} to be $\tilde{A}_{L,C}(x)$,
\[
 \tilde{A}_{L,C}(x) := \{(T^\ast,(x_t)_{t \in T^\ast}) \in A_L(x), \quad \mbox{$T^\ast$ satisfies \eqref{eq:titer}} \},
\]
we have a more precise description of $u_{k+L}$ than that of \eqref{eq:semigroup}. Namely,
\begin{equation}\label{eq:semigroupC}
 u_{k+L}(x) = \infsup_{(x_t)_T,T \in \tilde{A}_{L,C}(x)} w(x,T,(x_t)_{t \in T},u_k).
 \end{equation}
One has to be a little bit careful about the meaning of $\infsup$ in this case: For a sequence $(x_t)_{t \in T}$, and a vertex $t \in T$, we collect the history $(x_{\operatorname{hist},t}) = (x_{\tilde{t}})_{t < \tilde{t}}$ to be all the elements $x_{\tilde{t}}$ for $\tilde{t} < t$. Then
\[
 \infsup_{(x_t)_T,T \in \tilde{A}_{L,C}(x)} w(x,T^\ast,(x_t)_{t \in T^\ast},v) =
\]
\[
  \stinfsup^{[t_{\mathcal{L}}]}_{x_{t_{\mathcal{L}}} \in B(x_{\operatorname{par}(t_{\mathcal{L}})},(x_{\operatorname{hist},t_{\mathcal{L}}}))}\ 
 \ldots 
 \stinfsup^{[t_{1}]}_{x_{t_1} \in B(x_{t_{0}},(x_{\operatorname{hist},t_1}))} w(x,T^\ast,(x_t)_{t \in T^\ast},v),
\]
where the ``balls'' $B( x_{t_{\operatorname{par}(t_k)}} ,(x_{\operatorname{hist},t_k}))$ allow only such elements $y \in B( x_{t_{\operatorname{par}(t_k)}})$, such that picking $y$ as $x_{t_k}$ there still exists at least one sequence $(\tilde{x}_t)_{t \in T_L} \in B_L(x)$ such that $\tilde{x}_t = x_t$ for $t \leq t_k$, and the reduction $T^\ast$ from that sequence $(\tilde{x}_t)_{T}$ satisfies \eqref{eq:titer}. That is to say: Starting from a point $x_t \in Y$, both Players are allowed to take only those points $x_{t0}$ and $x_{t1}$ in $B(x_t)$ such that there is at least one possible way to progress the game with a resulting tree $T^\ast$ that is satisfying \eqref{eq:titer}.

%
%
%

Now we can set,
\[
 v_L(x) = \infsup_{(x_t)_T,T \in \tilde{A}_{L,C}(x)} w(x,T,(x_t)_{t \in T},0),
\]
and observe that by \eqref{eq:semigroupC},
\[
 |u_{k+L}(x) - v_L(x)| \leq \sup_{(x_t),T \in \tilde{A}_{L,C}(x)} |w(x,T,(x_t)_{t \in T},u_k)-w(x,T,(x_t)_{t \in T},0)|.
\]
Moreover,
\begin{align*}
 &|w(x,T,(x_t)_{t \in T},u_k)-w(x,T,(x_t)_{t \in T},0)|\\
 &\leq \sum_{t \in {\rm leaf}T, x_t \in Y} \mu^{l(t)} (1-\mu)^{r(t)}\ |u_k(x_t)| \\
 &\leq \Lambda \sum_{t \in {\rm leaf}T, x_t \in Y} \mu^{l(t)} (1-\mu)^{r(t)}\\
  &= \Lambda \sum_{t \in {\rm leaf}T, d(t)=d(T)=L} \mu^{l(t)} (1-\mu)^{r(t)}.
\end{align*}
By Proposition~\ref{pr:depthest} there exists $L_0 > 0$ (depending only on the constants involved), such that the latter is smaller than $\frac{\delta}{2}$ for any kind of tree of length greater or equal to $L_0$ satisfying \eqref{eq:titer}.

Thus we have shown
\begin{equation}\label{eq:equation1ukpL}
 \sup_{k} \sup_{x \in X} |u_{k+L}(x) -v_{L}(x)| \leq \frac{\delta}{2} \quad \mbox{for any $L \geq L_0$}.
\end{equation}
On the other hand, note that $v_i$ is the solution to \eqref{eq:iterationgen} starting from $v_0 = 0$. In particular, $\sup_i \sup_X |v_i| < \infty$ by Theorem~\ref{th:boundednessGen}. Repeating the argument from above, we obtain the existence of $K_0 > 0$ such that
\begin{equation}\label{eq:equation2vKpi}
\sup_{i} \sup_{x \in X} |v_{K+i}(x) -v_K(x)| \leq \frac{\delta}{2} \quad \mbox{for all $K \geq K_0$}.
\end{equation}
Combining \eqref{eq:equation1ukpL} and \eqref{eq:equation2vKpi}, we arrive at
\[
 \sup_{k,i} \sup_X |u_{k+L+K} -v_{L+K+i}| \leq \delta.
\]
\end{proof}

\bibliographystyle{plain}%
\bibliography{bib}%

\end{document}